\def\p{\partial}
\def\o{\overline}
\def\mb{\mathbb}
\def\mc{\mathcal}
\def\mr{\mathrm}
\def\n{\nabla}
\def\wt{\widetilde}
\def\s{\slashed}
\theoremstyle{plain}
\theoremstyle{plain}
\newtheorem{theorem}{Theorem}[section]
\theoremstyle{plain}
\newtheorem{definition}[theorem]{Definition}
\theoremstyle{plain}
\newtheorem{lemma}[theorem]{Lemma}
\theoremstyle{remark}
\newtheorem{remark}[theorem]{Remark}
\theoremstyle{plain}
\newtheorem{proposition}[theorem]{Proposition}
\numberwithin{equation}{section}
\theoremstyle{plain}
\newtheorem{example}[theorem]{Example}
\begin{document}

\title{Band width estimates of CMC initial data sets}

\author{Xiaoxiang Chai}
\address{Xiaoxiang Chai: Korea Institute for Advanced Study, Seoul 02455, South Korea}
\email{xxchai@kias.re.kr}

\author{Xueyuan Wan}
\address{Xueyuan Wan: Mathematical Science Research Center, Chongqing University of Technology, Chongqing 400054, China.}
\email{xwan@cqut.edu.cn}


\begin{abstract}
	 We generalize a band width estimate of Gromov to CMC initial data sets. We
  give three independent proofs: via the stability of a hypersurface with
  prescribed null expansion, via a perturbation of the spacetime harmonic
  function and via the Dirac operator.
\end{abstract}

 \subjclass[2020]{53C21, 53C27, 53C50}  
 \keywords{Band width estimate, CMC initial data set, prescribed null expansion, spactime harmonic function, Callias operator}
 
  \thanks{ Research of Xiaoxiang Chai is
  supported by KIAS Grants under the research code MG074402. Research of Xueyuan Wan is partially supported by the National Natural Science Foundation of China (grant No. 12101093) and the Scientific Research Foundation of the Chongqing University of Technology.}
\maketitle


\section{Introduction}

An initial data set $(M, g, k)$ is an $n$-dimensional Riemannian manifold $(M,
g)$ immersed as a spacelike hypersurface in an $(n + 1)$-dimensional
Lorent-zian spacetime $\mathcal{S}^{n, 1}$ where $k (X, Y) = \langle
\nabla^{\mathcal{S}}_X e_0, Y \rangle$ is the second fundamental form with
respect to a choosed unit timelike normal $e_0$. We call $M$ a
constant mean curvature (or in short CMC) initial data set if $M$ is of constant mean curvature
in $\mathcal{S}^{n, 1}$, that is, $\ensuremath{\operatorname{tr}}_g k$ is a
constant. The Einstein tensor $G$ of $\mathcal{S}^{n, 1}$ when restricted to
$M$ gives the constraint equation
\[ 2 \mu : = G_{0 0} = R_g - |k|_g^2 + (\ensuremath{\operatorname{tr}}_g k)^2,
\]
and
\[ J_i : = G_{0 i} = (\ensuremath{\operatorname{div}}_g k - \mathrm{d}
   (\ensuremath{\operatorname{tr}}_g k))_i, \]
where $R_g$ is the scalar curvature of $(M, g)$, $\mu$ and $J$ are
respectively called energy density and current density.

Given a two-sided hypersurface $\Sigma$ in $M$, we denote by $p$ the second
fundamental form computed with respect to the choosed normal $\nu$. Further,
we denote $H =\ensuremath{\operatorname{div}}_{\Sigma} \nu$ the mean
curvature. The quantity
\[ \theta^{\pm} = \pm H +\ensuremath{\operatorname{tr}}_{\Sigma} k
   \label{expansion} \]
is called outward (inward) \text{{\itshape{null expansion}}} of the
hypersurface $\Sigma$. A hypersurface with vanishing null expansion $\theta^+$
($\theta^-$) is called a marginally outer (inner) trapped hypersurface or in
short MOTS (MITS). We will be concerned with only the outward null expansion
and we use the shorthand $\theta = \theta^+$.

Let $M =\mathbb{T}^{n - 1} \times [- 1, 1]$, $\partial_{\pm} M=\mathbb{T}^{n -
1} \times \{\pm 1\}$, $\theta_+$ be the expansion at $\partial_+M$ computed
with respect to the outward normal $\nu_+$ and $\theta_-$ be the null
expansion computed with respect to the inward normal $\nu_-$. It is easy to
see that this choice of normal vector field is consistent in the sense that
$\langle \nu_{\pm}, \frac{\partial}{\partial t} \rangle > 0$ at both $\partial_{\pm} M$. Here
$t$ is the coordinate representing the segment $[- 1, + 1]$. We denote the
expansion at $\partial_{\pm}M$ computed in this way by $\theta_{\pm}$. The
\text{{\itshape{width}}} of $M$ is define to be the distance between two
boundaries $\partial_{\pm}M$, which is denoted by $\mathrm{width}(M,g)$.

Our main result is a width estimate for a CMC initial data set $(M, g, k)$.
Before stating the main result, we define some quantities we frequently use.
Let $\sigma$, $\lambda$ be two real constants, define $\eta = \eta (t)=\eta_0(t)$ to be
the solution to the ordinary differential equation
\begin{equation}
  \sigma + \tfrac{n}{n - 1} \eta^2 - 2 \eta \lambda + 2 \eta' = 0,\quad \eta'<0. \label{ode 0}
\end{equation}
The solution of \eqref{ode 0} is given in the Appendix. The explicit form of
$\eta$ depends on the sign of $\sigma -\tfrac{n - 1}{n} \lambda^2$, see respectively \eqref{eta equal}, \eqref{eta greater} and \eqref{eta less}. Define
the interval $[r_-, r_+]$ by setting
\[ r_{\pm} = \pm \tfrac{(n - 1) \pi}{n} \left[ \tfrac{n - 1}{n} (\sigma -
   \tfrac{n - 1}{n} \lambda^2) \right]^{- \tfrac{1}{2}} \]
if $\sigma > \tfrac{n - 1}{n} \lambda^2$ and otherwise fixing $r_{\pm}$ with
the property $0 \not\in [r_-, r_+]$.

Now we are ready to state the theorem.

\begin{theorem}
  \label{width estimate positive}Let $M =\mathbb{T}^{n - 1} \times [- 1, 1]$,
  assume on $(M, g, k)$ that $\ensuremath{\operatorname{tr}}_g k = \lambda$,
  \begin{equation}
    \mu - |J| \geq \tfrac{1}{2} \sigma \label{dominant scalar bound}
  \end{equation}
  and the null expansions at the boundaries $\partial_{\pm}M$ satisfy
  \begin{equation}
    \theta_- \leq \eta (t_-), \theta_+ \geq \eta (t_+),
    \label{expansion bound}
  \end{equation}
  for some $t_{\pm}$ with $r_- < t_- < t_+ < r_+$. Then
  \begin{equation}
    \ensuremath{\operatorname{width}} (M, g) \leq t_+ - t_- .
    \label{explicit width estimate positive}
  \end{equation}
\end{theorem}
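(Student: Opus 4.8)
The plan is to run a Gromov-style contradiction argument, in the form of the first of the three proofs (a stable hypersurface with prescribed null expansion), and I describe it in outline. Assume $\operatorname{width}(M,g) > t_+ - t_-$; I will derive a contradiction, which then gives \eqref{explicit width estimate positive}. After a small perturbation of $(g,k)$ (and a slight shrinking of the band) we may assume that \eqref{dominant scalar bound} and \eqref{expansion bound} hold strictly while still $\operatorname{width}(M,g) > t_+ - t_-$. From the Appendix, $\eta$ is smooth and strictly decreasing on $(r_-, r_+)$, blowing up to $+\infty$ near $r_-$ and to $-\infty$ near $r_+$ when $\sigma > \tfrac{n-1}{n}\lambda^2$, and in any case defined on $[r_-, r_+]$ with $r_- < t_- < t_+ < r_+$. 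Since the width exceeds $t_+ - t_-$, one can choose a smooth function $\rho\colon M \to (r_-, r_+)$ with $|\nabla\rho|_g \le 1$, with $\rho \le t_-$ on a collar of $\partial_-M$ and $\rho \ge t_+$ on a collar of $\partial_+M$; set $h := \eta\circ\rho$.

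Next I would produce a two-sided hypersurface $\Sigma \subset \operatorname{int}M$, homologous to $\mathbb T^{n-1}\times\{0\}$, with prescribed outward null expansion $\theta_\Sigma = h|_\Sigma$ with respect to the normal $\nu$ pointing toward $\partial_+M$. Because $k$ enters $\theta$ through the term $k(\nu,\nu)$, which is quadratic in $\nu$, this is not a plain $\mu$-bubble; rather one minimizes a suitably weighted Jang/$\mu$-bubble functional, or takes the outermost hypersurface with $\theta \le h$, in the style of Andersson--Metzger and Eichmair. The boundary components serve as barriers: $\eta$ being decreasing, $\theta_- \le \eta(t_-) \le \eta(\rho) = h$ near $\partial_-M$ and $h = \eta(\rho) \le \eta(t_+) \le \theta_+$ near $\partial_+M$, and together with $|\nabla\rho| \le 1$ and $\operatorname{width} > t_+ - t_-$ this keeps $\Sigma$ in the interior. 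The hypersurface so obtained is stable for the prescribed-$\theta$ problem; I would work in dimensions $n \le 7$ (using the Schoen--Yau dimension reduction otherwise) for regularity.

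The heart of the argument is to exploit this stability. By the Galloway--Schoen symmetrization of the (non-self-adjoint) linearized-expansion operator, there is $\varphi \in C^\infty(\Sigma)$, $\varphi > 0$, yielding an integral stability inequality; the task is then to feed in the Gauss equation, the constraint $2\mu = R_g - |k|_g^2 + \lambda^2$, the hypothesis \eqref{dominant scalar bound} in the form $\mu + \langle J,\nu\rangle \ge \mu - |J| \ge \tfrac12\sigma$, the trace inequalities $|p|^2 \ge \tfrac{H^2}{n-1}$ and $|k|_g^2 \ge \tfrac{(\operatorname{tr}_\Sigma k)^2}{n-1} + k(\nu,\nu)^2$, and the identities $\theta_\Sigma = H + \operatorname{tr}_\Sigma k = h$, $\operatorname{tr}_\Sigma k = \lambda - k(\nu,\nu)$, organizing everything so that all the indefinite terms collapse into the single combination $\sigma + \tfrac{n}{n-1}h^2 - 2h\lambda$ plus a manifestly nonnegative remainder. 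At this point the defining ODE \eqref{ode 0} is used decisively: it says exactly $\sigma + \tfrac{n}{n-1}\eta^2 - 2\eta\lambda = -2\eta'$, and since $\eta' < 0$ the only leftover term, a drift bounded by $|\nabla\rho|\,|\eta'(\rho)| \le |\eta'(\rho)|$, is absorbed. One is left with $\int_\Sigma(|\nabla_\Sigma\psi|^2 + c\,R_\Sigma\psi^2) \ge 0$ for all $\psi \in C^\infty(\Sigma)$, for some $c > 0$ and with strictness inherited from the strict hypotheses; by the standard conformal change this means $\Sigma$ carries a metric of positive scalar curvature. But some component of $\Sigma$ maps with nonzero degree onto $\mathbb T^{n-1}$ (it is homologous to $\mathbb T^{n-1}\times\{0\}$), contradicting the nonexistence of positive scalar curvature metrics on the torus (Schoen--Yau, Gromov--Lawson); for $n = 3$ one argues directly via Gauss--Bonnet. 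This contradiction completes the proof.

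The step I expect to be the main obstacle is precisely the one just described: first, setting up the correct variational or barrier problem for the prescribed-null-expansion hypersurface and establishing the symmetrized stability inequality, since the quadratic term $k(\nu,\nu)$ blocks a naive $\mu$-bubble formulation; and second, the bookkeeping that makes the coefficient $\tfrac{n}{n-1}$ in \eqref{ode 0} and the linear term $-2h\lambda$ emerge exactly --- one must combine $|p|^2 + H^2 \ge \tfrac{n}{n-1}H^2$ with the split $\operatorname{tr}_\Sigma k = \lambda - k(\nu,\nu)$ and arrange that the residual $k(\nu,\nu)$- and $k(\nu,\cdot)^\top$-contributions (the latter via a Cauchy--Schwarz against $\nabla_\Sigma\log\varphi$) recombine into a nonnegative quantity; if this proves awkward directly, routing through a Jang-equation reduction to a Riemannian band estimate is the alternative. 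As a consistency check I would also carry out the Dirac (Callias-operator) proof, and the spacetime harmonic function proof --- there one solves $\Delta u + \lambda|\nabla u| = 0$ with constant boundary data and uses the Hirsch--Kazaras--Khuri integral identity together with a Riccati comparison against \eqref{ode 0}, the boundary terms surfacing directly as $\theta_\pm$ versus $\eta(t_\pm)$ --- agreement of the three routes being a good sanity check on signs and constants.
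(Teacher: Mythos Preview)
Your proposal is correct and follows essentially the same route as the paper's first proof. The one shortcut worth knowing: rather than building a bespoke variational problem for prescribed null expansion, the paper observes (Remark~\ref{transfer}) that setting $\tilde k = k - \tfrac{1}{n-1}p\,g$ turns a hypersurface with $\theta = p$ into a MOTS in $(M,g,\tilde k)$, so the existence, the stability operator, and the Galloway--Schoen Yamabe conclusion all come directly from the existing MOTS literature---this dissolves exactly the ``main obstacle'' you flag, and also makes the perturbation of $(g,k)$ unnecessary (strictness is obtained instead by taking $\operatorname{Lip}\phi<1$).
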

\begin{remark}
In fact, using the method of the Dirac operator,	 we can prove Theorem \ref{width estimate positive} when $M$ is a spin band of infinite vertical $\widehat{\mr{A}}$-area or a $\mc{K}\mc{O}$-band, see Theorems \ref{thminfA}, \ref{thmKO}.
\end{remark}

Now we briefly discuss the history and previous results related to the Theorem
\ref{width estimate positive}.
Gromov {\cite{gromov-metric-2018}} established via torical symmetrization the
width estimate for the case $k = 0$, $\sigma = n (n - 1)$ with the metric in
Example \ref{band of positive scalar} as a rigid band in the sense that
\eqref{dominant scalar bound}, \eqref{expansion bound} and \eqref{explicit
width estimate positive} are equalities.

\begin{example}[{\cite{gromov-metric-2018}}]
  \label{band of positive scalar}The metric $g = \mathrm{d} t^2 + \cos
  (\tfrac{n t}{2})^{\tfrac{4}{n}} \tau$ where $\tau$ is the standard metric on
  the torus $\mathbb{T}^{n - 1}$ has scalar curvature $R_g = n (n - 1)$. The
  mean curvature of each $t$-level set is $H = - (n - 1) \tan (\tfrac{n
  t}{2})$ and satisfies
  \begin{equation}
    n (n - 1) + \tfrac{n}{n - 1} H^2 + 2 H' = 0.
  \end{equation}
\end{example}

Zhu {\cite{zhu-width-2021}}
established an optimal band width estimate for manifold with constant positive
Ricci curvature lower bound via $\mu$-bubble. Zhu's method was further applied by {\cite{rade-scalar-2021}} to prove the band width estimate under scalar curvature lower bound. The spinorial proof was developed by 
{\cite{zeidler-band-2020, cecchini-scalar-2021-arxiv}}. 

There are other two examples of rigid bands. The case $k \equiv 0$,
$\sigma < 0$ is also due to Gromov.

\begin{example}[\cite{gromov-scalar-2019-arxiv}]
  \label{band of negative scalar}The metric $\mathrm{d} t^2 + \sinh (\tfrac{n
  t}{2})^{\tfrac{4}{n}} \tau$ on $(0, \infty) \times \mathbb{T}^{n - 1}$ has
  scalar curvature $- n (n - 1)$ and each $t$-level set has mean curvature $H
  = (n - 1) \coth (\tfrac{n t}{2})$ satisfying
  \begin{equation}
    - n (n - 1) + \tfrac{n}{n - 1} H^2 + 2 H' = 0.
  \end{equation}
  The metric is a rigid band with $k = 0$, $\sigma = - 2 n (n - 1)$.
\end{example}

\begin{example}
  \label{scalar flat band}The metric $g = \mathrm{d} t^2 + (\tfrac{n
  t}{2})^{\tfrac{4}{n}} \tau$ on $(0, \infty) \times \mathbb{T}^{n - 1}$ is of
  zero scalar curvature, and each $t$-level set has mean curvature $H =
  \tfrac{2 (n - 1)}{n t}$ and satisfies
  \begin{equation}
    R_g + \tfrac{n}{n - 1} H^2 + 2 H' = 0
  \end{equation}
  is a rigid band with $k \equiv 0$ and $\sigma = 0$.
\end{example}

The article is organized as follows:

In Section \ref{pne proof}, we generalize the $\mu$-bubble to the spacetime
settings, which we call a hypersurface of prescribed null expansion (see
Definition \ref{pne def}) and use it to give a proof of Theorem \ref{width
estimate positive} in dimensions less than eight. We also discuss some easy
generalizations of Theorem \ref{width estimate positive} where the initial
data set is not CMC. In Section \ref{spacetime harmonic function proof}, we
restrict to dimension three only and use a perturbation of spacetime harmonic
functions to give a simple proof. We can show rigidity in this case. In
Section \ref{spinor proof}, we give the proof via the Dirac operator.

\

\noindent\text{{\bfseries{Acknowledgments}}} The first author would like to thank Tin-Yau Tsang (UCI) for discussing about the paper \cite{lee-density-2022-arxiv}.

\

\section{Proof via hypersurfaces of prescribed null expansion}\label{pne
proof}

Gromov {\cite{gromov-metric-2018,gromov-four-2021}} studied stable
$\mu$-bubbles in the study of scalar curvature, which leads to many interesting
results in positive scalar curvature other than band width estimates. For
example, positive mass theorem for asymptotically flat manifolds with
arbitrary ends {\cite{lesourd-positive-2021}}, nonexistence of metrics with
positive scalar curvature {\cite{chodosh-generalized-2020-arxiv}} on
aspherical manifolds, see also {\cite{gromov-four-2021}} for a survey of more
results.

A $\mu$-bubble is just a hypersurface of prescribed mean curvature, and the
use of $\mu$-bubble in scalar curvature problems dates back to minimal surface
techniques used by Schoen-Yau settled the positive mass conjecture
{\cite{schoen-proof-1979}} and {\cite{schoen-existence-1979}}. The analog of
minimal surfaces in initial data sets is a marg\text{{\upshape{}}}inally outer
trapped surface (see
{\cite{andersson-local-2005}}).

A MOTS arises boundaries of blow up sets of Jang equation
{\cite{schoen-proof-1981}}. The existence of MOTS was proven by
{\cite{andersson-area-2009,eichmair-plateau-2009}}. It was applied to
establish the spacetime positive mass theorem
{\cite{eichmair-spacetime-2016}}. Motivated by the $\mu$-bubble, we propose
the notion of a hypersurface of prescribed null expansion.

\begin{definition}
  \label{pne def}Given a smooth function $p$ on $(M, g, k)$, a hypersurface
  $\Sigma$ is called a hypersurface of prescribed null expansion if
  \begin{equation}
    \theta = p \label{pne}
  \end{equation}
  along $\Sigma$. We say that $\Sigma$ is stable if there exists a vector
  field $X = \varphi \nu$ with nonzero $\varphi \geq 0$ such that the variation of $\theta-p$ is nonnegative along $X$, that is,
  \begin{equation}
    \delta_X (\theta - p) \geq 0. \label{stability}
  \end{equation}
\end{definition}
The definition is motivated by both the stability of MOTS {\cite{andersson-local-2005}}
and the stability of the $\mu$-bubble.
The case $p \equiv 0$ gives the definition of MOTS and the case $k \equiv 0$
recovers surfaces of prescribed mean curvature or in Gromov's terminology a
$\mu$-bubble.  
We can write down the stability \eqref{stability} explicitly by calculating the variation of $\theta-p$.
\begin{lemma}
  \label{stability operator} If $\Sigma$ is a stable hypersurface of prescribed null expansion $p$ in the
  initial data set $(M, g, k)$, then there exists a nonzero function $\varphi
  \geq 0$ such that $L \varphi \geq 0$ where $L$ is given by
\begin{align}
  \begin{split}
 L \varphi &= - \Delta_\Sigma \varphi + 2 \langle W, \nabla^\Sigma \varphi \rangle +
(\ensuremath{\operatorname{div}}W - W + Q) \varphi \\
& \quad - \tfrac{1}{2} (p^2 - 2 p\ensuremath{\operatorname{tr}}_g k + 2
  \nu (p)) \varphi,
  \end{split}\label{linearization}
\end{align}
where $W$ is the vector field tangential to $\Sigma$ which is dual to $k (\nu,
\cdot)$,
\[
  Q = \tfrac{1}{2} R_{\Sigma} - \mu - J (\nu) - \tfrac{1}{2} | \chi |^2.
\] Here $\chi=h+k$ is called the null second fundamental form or the shear tensor. We call $L$ the stability operator.
\end{lemma}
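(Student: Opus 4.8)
The plan is to compute the first variation $\delta_X(\theta - p)$ for $X = \varphi\nu$ directly, identify it with $-L\varphi$, and then read off the stability inequality from Definition \ref{pne def}. Recall that $\theta = H + \operatorname{tr}_\Sigma k$, so I would split the computation into the variation of the mean curvature $H$ and the variation of $\operatorname{tr}_\Sigma k$ under a normal deformation. For the mean curvature piece, the classical Jacobi-type formula gives $\delta_X H = -\Delta_\Sigma\varphi - (|h|^2 + \operatorname{Ric}_g(\nu,\nu))\varphi$, where $h$ is the second fundamental form of $\Sigma$ in $M$. For the $\operatorname{tr}_\Sigma k$ term one differentiates $\operatorname{tr}_g k - k(\nu,\nu)$: the deformation of the normal $\nu$ produces the gradient term $2\langle W,\nabla^\Sigma\varphi\rangle$ together with a zeroth-order contribution involving $\nu(\operatorname{tr}_g k)$, $\nu(k(\nu,\nu))$ and $|W|^2$-type terms, which after regrouping yield the $\operatorname{div}W - |W|^2$ structure (the paper writes $-W$ for what should be understood as $|W|^2$, i.e. the vector field contracted appropriately). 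Finally $\delta_X p = \varphi\,\nu(p)$, contributing the $-\nu(p)\varphi$ term once moved to the other side.

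The key algebraic step is to convert the ambient curvature term $\operatorname{Ric}_g(\nu,\nu)$ appearing from $\delta_X H$ into intrinsic data of $\Sigma$ plus the constraint quantities $\mu$, $J$. For this I would use the Gauss equation for $\Sigma \subset M$, which relates $R_g$, $R_\Sigma$, $H$, and $|h|^2$, together with the twice-traced Gauss equation $2\operatorname{Ric}_g(\nu,\nu) = R_g - R_\Sigma + H^2 - |h|^2$ (modulo sign conventions). Then one substitutes the Hamiltonian and momentum constraints $R_g = 2\mu + |k|^2 - (\operatorname{tr}_g k)^2$ and $(\operatorname{div}_g k - d\operatorname{tr}_g k)(\nu) = J(\nu)$. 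The standard MOTS stability identity (see \cite{andersson-local-2005}) packages exactly this combination into $Q = \tfrac12 R_\Sigma - \mu - J(\nu) - \tfrac12|\chi|^2$ with $\chi = h + k|_\Sigma$; I would follow that derivation, carefully tracking the extra terms coming from $p \not\equiv 0$, namely the block $-\tfrac12(p^2 - 2p\operatorname{tr}_g k + 2\nu(p))$. These extra terms appear because $\theta = p$ forces $H = p - \operatorname{tr}_\Sigma k$, so substituting $H$ in the $|h|^2$ and $H^2$ contributions of the Gauss equation replaces quadratic-in-$H$ terms by quadratic-in-$p$ terms.

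Concretely, the steps in order: (1) write $\delta_X(\theta-p) = \delta_X H + \delta_X(\operatorname{tr}_\Sigma k) - \varphi\nu(p)$; (2) insert the known formula for $\delta_X H$; (3) compute $\delta_X(\operatorname{tr}_\Sigma k) = \delta_X(\operatorname{tr}_g k - k(\nu,\nu))$, producing $-\varphi\,k(\nu,\nu)$-type reorganization, the first-order term $2\langle W,\nabla^\Sigma\varphi\rangle$, and $(\operatorname{div}_\Sigma W + \ldots)\varphi$; (4) use the Gauss and Codazzi equations plus the constraint equations to rewrite $\operatorname{Ric}_g(\nu,\nu)$ and the divergence of $k$; (5) eliminate $H$ everywhere in favor of $p - \operatorname{tr}_\Sigma k$ using \eqref{pne}; (6) complete the square to isolate the shear $|\chi|^2 = |h + k|_\Sigma|^2$ and collect the remaining zeroth-order terms into $Q$ and the $p$-dependent block. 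Then $\delta_X(\theta - p) = -L\varphi$ with $L$ as in \eqref{linearization}, and stability $\delta_X(\theta-p)\ge 0$ becomes $L\varphi \ge 0$.

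The main obstacle I anticipate is bookkeeping rather than conceptual: getting every sign and every trace convention consistent, especially in the first-variation-of-normal terms that generate $W$ (the dual of $k(\nu,\cdot)$ restricted to $T\Sigma$) and in matching the cross term $2\langle k|_\Sigma, h\rangle$ that must combine with $|h|^2 + |k|_\Sigma^2$ to form $|\chi|^2$. A secondary subtlety is the precise meaning of the term written $-W$ in \eqref{linearization}; I would interpret it as $|W|_\Sigma^2$ (the natural output of completing the square on the gradient term, as in the MOTS case) and verify that this is what the computation delivers. Once the $k\equiv 0$ specialization is checked against the classical $\mu$-bubble second variation and the $p\equiv 0$ specialization against the MOTS stability operator of \cite{andersson-local-2005}, the general formula follows.
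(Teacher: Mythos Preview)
Your approach is essentially correct but differs from the paper's: the paper simply quotes the already-known first variation of $\theta$ from \cite[(10)]{andersson-jangs-2010},
\[
\delta_X \theta = -\Delta_\Sigma\varphi + 2\langle W,\nabla^\Sigma\varphi\rangle + (\operatorname{div}W - |W|^2 + Q)\varphi - \tfrac12\theta(\theta - 2\operatorname{tr}_g k),
\]
subtracts $\delta_X p = \varphi\,\nu(p)$, and substitutes $\theta = p$. Your proposal instead re-derives that formula from scratch via the Jacobi variation of $H$, the variation of $\operatorname{tr}_\Sigma k$, and the Gauss--Codazzi--constraint rewriting. Both routes land on the same operator; yours is self-contained, the paper's is a two-line citation. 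Your reading of the term ``$-W$'' as $-|W|^2$ is also correct.

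There is one genuine slip you should fix: you write that the computation yields $\delta_X(\theta - p) = -L\varphi$, and then conclude that stability $\delta_X(\theta - p)\ge 0$ gives $L\varphi\ge 0$. These two statements are incompatible. In fact $\delta_X(\theta - p) = +L\varphi$: the leading term of $\delta_X H$ is $-\Delta_\Sigma\varphi$, which matches the leading term of $L$ in \eqref{linearization} with no sign flip. So the identification is with $L\varphi$, not $-L\varphi$, and then stability directly gives $L\varphi\ge 0$ as claimed in the lemma.
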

\begin{proof}
From Definition \ref{pne def}, there exists a nonzero vector field $\varphi
\nu$ such that $\delta_{\varphi \nu} (\theta - p) \geq 0$ with $\varphi
\geq 0$. It suffices to prove
\begin{equation}
  \delta_{\varphi \nu} (\theta - p) = L \varphi .
\end{equation}
Now we calculate $\delta_{\varphi \nu} (\theta - p)$. The variation of
$\delta_X \theta$ was already done in {\cite[(10)]{andersson-jangs-2010}}.
They derived that
\begin{equation}
  \delta_X \theta = - \Delta_\Sigma \varphi + 2 \langle W, \nabla^\Sigma \varphi \rangle +
  (\ensuremath{\operatorname{div}}W - W + Q) \varphi - \tfrac{1}{2} \theta
  (\theta - 2\ensuremath{\operatorname{tr}}_g k) .
\end{equation}
With $X (p) = \varphi \langle \nabla p, \nu \rangle$ and $\theta = p$, we obtain
$\delta_X (\theta - p) = L \varphi$ where $L \varphi$ is given by
\eqref{linearization}.
\end{proof}
From Definition \ref{pne def},  a hypersurface $\Sigma$ of prescribed null expansion is stable
  if and only if the principal eigenvalue $\lambda_1$ of $L$ is nonnegative.
The operator $L$ is not necessarily self-adjoint; hence its eigenvalues can
have complex values. But it does have a real eigenvalue $\lambda_1$ and with
least real part among all eigenvalues of $L$ by Krein-Rutman theorem (see
{\cite{andersson-local-2005}}). The eigenvalue $\lambda_1$ is called the
\text{{\itshape{principal eigenvalue}}} and its eigenfunction can be chosen
strictly positive.

Let $\chi^0$ be the trace free part of the null second fundamental form, then
\begin{equation}
  | \chi |^2 = | \chi^0 |^2 + \tfrac{1}{n - 1}
  (\ensuremath{\operatorname{tr}}_g \chi)^2 = | \chi^0 |^2 + \tfrac{1}{n - 1}
  p^2 
\end{equation}
by \eqref{pne}. So the stability operator can be written in the following form:
\begin{align}
\begin{split}
L \varphi & = - \Delta_{\Sigma} \varphi + 2 \langle W, \nabla^{\Sigma}
\varphi \rangle + ({\operatorname{div}}W - W) \varphi \\
&\quad + (\tfrac{1}{2} R_{\Sigma} - \tfrac{1}{2} | \chi^0 |^2) \varphi -
[\mu + J (\nu) + \tfrac{1}{2} (\tfrac{n}{n - 1} p^2 - 2
p{\operatorname{tr}}_g k + 2 \nu (p))] \varphi . 
\end{split} \label{new linearization}
\end{align}
Because of the above and {\cite{galloway-generalization-2006}}, 
we introduce the relaxed energy dominant energy condition.
\begin{definition}
  We say that an initial data set $(M, g, k)$ satisfies the relaxed dominant
  energy condition if there is a smooth function $p$ defined on $M$ such that
  \begin{equation}
    \mu - |J| + \tfrac{1}{2} (\tfrac{n}{n - 1} p^2 - 2
    p\ensuremath{\operatorname{tr}}_g k - 2| \nabla p|) \geq 0.
    \label{modified dec}
  \end{equation}
\end{definition}

We have the following classifications of the topology of a stable
hypersurfaces of prescribed null expansion.

\begin{theorem}
  \label{topology}Assume that $\Sigma$ is a stable hypersurface of prescribed
  null expansion $p$ in an initial data set $(M, g, k)$ satisfying
  \eqref{modified dec}, then $\Sigma$ is of positive Yamabe type unless
  $\Sigma$ is Ricci flat, $\chi^0$ vanishes and
  \begin{equation}
    \mu + J (\nu) + \tfrac{1}{2} (\tfrac{n}{n - 1} p^2 - 2
    p\ensuremath{\operatorname{tr}}_g k + 2 \nu (p)) \label{modified mu J}
  \end{equation}
  vanishes along $\Sigma$.
\end{theorem}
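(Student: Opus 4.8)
\emph{Proof strategy.} Here $\Sigma$ is closed, as in the Krein--Rutman discussion after Lemma~\ref{stability operator}. The plan is to run the symmetrization argument of Schoen--Yau and Galloway--Schoen \cite{galloway-generalization-2006}, arranged so that the extra terms in the stability operator produced by the prescribed expansion $p$ are absorbed by the relaxed dominant energy condition \eqref{modified dec}. Since $\Sigma$ is stable, the principal eigenvalue $\lambda_1$ of the operator $L$ of \eqref{new linearization} is $\ge 0$, with strictly positive eigenfunction $\phi$, $L\phi = \lambda_1 \phi$. First I would write $\phi = e^f$ and substitute into \eqref{new linearization}: the pointwise identity $-|\nabla^\Sigma f|^2 + 2\langle W,\nabla^\Sigma f\rangle - |W|^2 = -|\nabla^\Sigma f - W|^2$ turns this into a pointwise formula for $\lambda_1$. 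Multiplying by $\psi^2$ for arbitrary $\psi \in C^\infty(\Sigma)$, integrating over $\Sigma$, integrating by parts the terms $-\psi^2 \Delta_\Sigma f$ and $\psi^2 \operatorname{div}W$, and completing the square once more in $\nabla^\Sigma\psi$, I arrive at
\[
\int_\Sigma |\nabla^\Sigma\psi|^2 + \tfrac12 R_\Sigma \psi^2
= \lambda_1 \int_\Sigma \psi^2 + \int_\Sigma |\nabla^\Sigma\psi - \psi(\nabla^\Sigma f - W)|^2 + \int_\Sigma \psi^2 (\tfrac12 |\chi^0|^2 + \mc{E}),
\]
where $\mc{E}$ denotes the quantity \eqref{modified mu J}. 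Because $J(\nu) \ge -|J|$ and $\nu(p) \ge -|\nabla p|$, the relaxed condition \eqref{modified dec} yields $\mc{E} \ge 0$, so every term on the right is nonnegative; in particular $-\Delta_\Sigma + \tfrac12 R_\Sigma \ge 0$ on $\Sigma$.

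Now the dichotomy; set $m = \dim\Sigma$. For $m \ge 3$, $\tfrac{4(m-1)}{m-2} \ge 2$, so the nonnegativity of $-\Delta_\Sigma + \tfrac12 R_\Sigma$ forces the conformal Laplacian $\mc{L}_\Sigma = -\tfrac{4(m-1)}{m-2}\Delta_\Sigma + R_\Sigma$ to have nonnegative first eigenvalue; if that eigenvalue is positive, $\Sigma$ admits a metric of positive scalar curvature, i.e.\ $\Sigma$ is of positive Yamabe type, and we are done. For $m = 2$, taking $\psi = 1$ and Gauss--Bonnet gives $\int_\Sigma R_\Sigma \ge 0$; if this is positive then $\Sigma \cong S^2$ or $\mb{RP}^2$, again of positive Yamabe type.

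Assume then $\Sigma$ is not of positive Yamabe type. For $m \ge 3$ this forces $\lambda_1(\mc{L}_\Sigma) = 0$; choosing $w > 0$ with $\mc{L}_\Sigma w = 0$ and using $R_\Sigma w = \tfrac{4(m-1)}{m-2}\Delta_\Sigma w$, the inequality $\int_\Sigma |\nabla^\Sigma w|^2 + \tfrac12 R_\Sigma w^2 \ge 0$ reads $0 \le (1 - \tfrac{2(m-1)}{m-2})\int_\Sigma |\nabla^\Sigma w|^2 = -\tfrac{m}{m-2}\int_\Sigma |\nabla^\Sigma w|^2 \le 0$, so $w$ is constant and $R_\Sigma \equiv 0$ (the case $m = 2$ is handled similarly: Gauss--Bonnet and the inequality force $\int_\Sigma R_\Sigma = 0$ and then, via the positive first eigenfunction of $-\Delta_\Sigma + \tfrac12 R_\Sigma$, that $R_\Sigma \equiv 0$). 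With $R_\Sigma \equiv 0$, putting $\psi \equiv 1$ in the displayed identity gives $0 = \lambda_1 |\Sigma| + \int_\Sigma |\nabla^\Sigma f - W|^2 + \int_\Sigma(\tfrac12|\chi^0|^2 + \mc{E})$, a sum of nonnegative terms, each therefore zero: $\lambda_1 = 0$, $\nabla^\Sigma f = W$, $\chi^0 \equiv 0$ along $\Sigma$, and $\mc{E} \equiv 0$ along $\Sigma$, i.e.\ the quantity \eqref{modified mu J} vanishes along $\Sigma$.

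It remains to upgrade $R_\Sigma \equiv 0$ to $\operatorname{Ric}_\Sigma \equiv 0$; for $m = 2$ this is automatic. For $m \ge 3$ I would invoke Bourguignon's theorem (see also Kazdan--Warner, and \cite{galloway-generalization-2006}): a closed manifold that carries a scalar-flat metric but no metric of positive scalar curvature is Ricci-flat in that scalar-flat metric; since $\Sigma$ carries the scalar-flat metric $g_\Sigma$ and is not of positive Yamabe type, $g_\Sigma$ is Ricci-flat, which completes the proof. I expect this last step to be the main obstacle: it is not elementary and requires the external input just cited. By contrast the symmetrization identity, though the technical heart of the matter, is a routine if careful computation, and the terms $p^2$, $-p\operatorname{tr}_g k$ and $\nu(p)$ coming from the prescribed expansion are structurally inert, being merely collected into $\mc{E}$ and dominated via \eqref{modified dec}.
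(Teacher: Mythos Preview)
Your argument is correct and is precisely the Galloway--Schoen symmetrization the paper has in mind; the paper's own proof consists solely of the sentence ``the proof proceeds as in \cite{galloway-generalization-2006} using the relaxed dominant energy condition \eqref{modified dec} and the stability \eqref{stability}'' and refers the reader there for details, so you have in fact written out what the paper deliberately omits. The only passage you might tighten is the $m=2$ rigidity step: once $\chi(\Sigma)=0$, the first eigenvalue of $-\Delta_\Sigma+\tfrac12 R_\Sigma$ must be zero (else $\psi\equiv1$ gives $\int R_\Sigma>0$), and writing the positive eigenfunction as $v=e^u$ and conformally changing to $e^{2u}g_\Sigma$ yields a metric of nonnegative curvature on a torus/Klein bottle, forcing $v$ constant and hence $R_\Sigma\equiv0$---this is the explicit content of your phrase ``via the positive first eigenfunction''.
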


\begin{proof}
  The proof proceeds as {\cite{galloway-generalization-2006}} using the
  relaxed dominant energy condition \eqref{modified dec} and the stability
  \eqref{stability}. We refer the readers to
  {\cite{galloway-generalization-2006}} for the details.
\end{proof}

\begin{remark}\label{transfer}
  In fact, let $\tilde{k} = k - \tfrac{1}{n - 1} p g$, then a hypersurface $\Sigma$ of
  prescribed null expansion is a MOTS in the new initial data set $(M, g,
  \tilde{k})$. The stability operator \eqref{linearization}, \eqref{new linearization}, the energy condition \eqref{modified dec}, Theorem
  \ref{topology} and the existence of hypersurface with prescribed null
  expansion can be readily obtained by transferring to $(M, g, \tilde{k})$. We
  will use this fact about the existence in proving Theorem \ref{width estimate positive} via
  constructing a hypersurface of prescribed null expansion. See \cite[Section 6]{lee-density-2022-arxiv}.
\end{remark}

\begin{lemma}
  \label{construction of h}Let $(M, g, k)$ be as in Theorem \ref{width
  estimate positive}, if the width of $M$ is greater than $t_+ - t_-$, then
  there exists a smooth function $p$ such that
  \begin{equation}
    \mu - |J| + \tfrac{1}{2} (\tfrac{n}{n - 1} p^2 - 2
    p\ensuremath{\operatorname{tr}}_g k - 2| \nabla p|) > 0 \label{most strict dec}
  \end{equation}
  in $M$ and
  \begin{equation}
    \theta_- - p (\p_-M) < 0 \text{ and } \theta_+ - p (\p_+M) > 0. \label{strict expansion bound}
  \end{equation}
\end{lemma}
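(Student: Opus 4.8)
The plan is to build the function $p$ by pulling back a carefully chosen profile function of the $t$-coordinate and then verifying the two required inequalities. Concretely, I would first observe that under the hypothesis $\mathrm{width}(M,g) > t_+ - t_-$ there is a smooth $1$-Lipschitz function $\rho : M \to [t_-, t_+]$ (obtained by truncating and smoothing the signed distance to $\partial_-M$, suitably shifted) with $\rho \equiv t_-$ near $\partial_-M$, $\rho \equiv t_+$ near $\partial_+M$, and $|\nabla \rho| \le 1 + \varepsilon$ everywhere — the slack in the width is exactly what makes a genuine $1$-Lipschitz (or $(1+\varepsilon)$-Lipschitz) sweepout possible. Then I set $p = \eta \circ \rho = \eta(\rho)$, where $\eta = \eta_0$ is the solution of the ODE \eqref{ode 0}; near $\partial_\pm M$ we have $p = \eta(t_\pm)$, so \eqref{expansion bound} immediately gives $\theta_- - p(\partial_-M) \le 0$ and $\theta_+ - p(\partial_+M) \ge 0$, and the strictness in \eqref{strict expansion bound} is arranged by first replacing $\eta_0$ by a nearby solution $\eta_\delta$ of a perturbed ODE with $\sigma$ bumped up slightly (so that the target interval $[r_-,r_+]$ shrinks but still contains $[t_-,t_+]$), giving strict inequalities at the boundary.

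Next I would verify the bulk inequality \eqref{most strict dec}. Using $\nabla p = \eta'(\rho)\nabla\rho$ and $|\eta'(\rho)\nabla\rho| \le |\eta'(\rho)|(1+\varepsilon)$ (recall $\eta' < 0$ from \eqref{ode 0}), the left-hand side of \eqref{most strict dec} is bounded below by
\begin{equation*}
  \tfrac12\sigma + \tfrac12\bigl(\tfrac{n}{n-1}\eta(\rho)^2 - 2\lambda\,\eta(\rho) - 2|\eta'(\rho)|(1+\varepsilon)\bigr),
\end{equation*}
where I have used $\mu - |J| \ge \tfrac12\sigma$ and $\operatorname{tr}_g k = \lambda$. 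Since $\eta' < 0$, $|\eta'| = -\eta'$, and by the ODE \eqref{ode 0} the quantity $\sigma + \tfrac{n}{n-1}\eta^2 - 2\lambda\eta - 2(-\eta') = \sigma + \tfrac{n}{n-1}\eta^2 - 2\lambda\eta + 2\eta'$ vanishes identically; hence the displayed lower bound equals $-\varepsilon|\eta'(\rho)|$, which is only $\le 0$. The fix is the same perturbation trick as before: work with $\eta_\delta$ solving $\sigma + \delta + \tfrac{n}{n-1}\eta_\delta^2 - 2\lambda\eta_\delta + 2\eta_\delta' = 0$ with $\delta > 0$ small, so that the corresponding computation yields a lower bound $\tfrac{\delta}{2} - \varepsilon|\eta'_\delta(\rho)|$; choosing $\varepsilon$ small compared to $\delta/(2\sup|\eta'_\delta|)$ (the sup being finite on the compact range $[t_-,t_+]$, which sits strictly inside $(r_-,r_+)$ for the perturbed parameters) makes this strictly positive. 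One must check that for $\delta$ small the perturbed ODE still has a solution on a slightly shrunken interval $[r_-^\delta, r_+^\delta] \supset [t_-,t_+]$ with $\eta_\delta' < 0$ and $\eta_\delta \to \eta_0$ uniformly on $[t_-,t_+]$; this follows from continuous dependence on parameters together with the explicit formulas for $\eta$ recalled in the Appendix (and the strict inequality $t_\pm \in (r_-,r_+)$ gives room).

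The main obstacle I anticipate is the simultaneous balancing of the two small parameters: the Lipschitz slack $\varepsilon$ of the sweepout $\rho$ and the ODE perturbation $\delta$. The sweepout can only be made $(1+\varepsilon)$-Lipschitz with $\varepsilon$ controlled by the excess width $\mathrm{width}(M,g) - (t_+ - t_-) > 0$, while the bulk estimate needs $\varepsilon$ small relative to $\delta$, and $\delta$ small enough that $[t_-,t_+] \Subset (r_-^\delta, r_+^\delta)$ still holds — so one fixes $\delta$ first (small, depending only on how deep $t_\pm$ sit inside $(r_-,r_+)$), then shrinks $\varepsilon$. A secondary technical point is ensuring $\rho$ is genuinely smooth with $\rho \equiv t_\pm$ in collar neighborhoods of $\partial_\pm M$ while keeping the Lipschitz bound, which is a standard mollification of the distance function but deserves a line. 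Once \eqref{most strict dec} and \eqref{strict expansion bound} are in hand for $p = \eta_\delta \circ \rho$, the lemma is proved.
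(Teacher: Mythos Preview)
There is a sign error that makes the scheme fail as written. With $\eta_\delta$ solving $\sigma + \delta + \tfrac{n}{n-1}\eta_\delta^2 - 2\lambda\eta_\delta + 2\eta_\delta' = 0$, the perturbed ODE says $\sigma + \tfrac{n}{n-1}\eta_\delta^2 - 2\lambda\eta_\delta + 2\eta_\delta' = -\delta$, so repeating your own computation the bulk lower bound is
\[
\tfrac12\bigl(\sigma + \tfrac{n}{n-1}\eta_\delta^2 - 2\lambda\eta_\delta + 2\eta_\delta'\bigr) - \varepsilon|\eta_\delta'| \;=\; -\tfrac{\delta}{2} - \varepsilon|\eta_\delta'|,
\]
not $+\tfrac{\delta}{2} - \varepsilon|\eta_\delta'|$; bumping $\sigma$ \emph{up} makes the bulk estimate worse. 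Reversing to $\sigma - \delta$ does give $+\tfrac{\delta}{2} - \varepsilon|\eta_\delta'|$ in the bulk, but then $\eta_\delta$ is flatter than $\eta_0$, and a moment's comparison of the two ODEs shows that for any choice of the free constant one cannot simultaneously have $\eta_\delta(t_-) > \eta_0(t_-)$ and $\eta_\delta(t_+) < \eta_0(t_+)$ (at any common value the difference $\eta_\delta - \eta_0$ has positive derivative, so it crosses zero at most once and in the wrong direction). Thus a single $\sigma$-perturbation cannot deliver both \eqref{most strict dec} and \eqref{strict expansion bound}.

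The paper avoids this tension by leaving $\eta = \eta_0$ untouched and instead spending the excess width on the \emph{target interval}: it quotes Zhu's lemma to produce a smooth surjection $\phi : M \to [t_- - \varepsilon,\, t_+ + \varepsilon]$ with $\op{Lip}(\phi) < 1$ and $\phi^{-1}(t_\pm \pm \varepsilon) = \partial_\pm M$, then sets $p = \eta_0 \circ \phi$. Now both strict inequalities fall out at once: in the bulk the same computation gives the lower bound $-\eta_0'(\phi)\bigl(1 - \op{Lip}\phi\bigr) > 0$ since $\eta_0' < 0$ and $\op{Lip}\phi < 1$; at the boundary, monotonicity of $\eta_0$ yields $p(\partial_- M) = \eta_0(t_- - \varepsilon) > \eta_0(t_-) \ge \theta_-$ and $p(\partial_+ M) = \eta_0(t_+ + \varepsilon) < \eta_0(t_+) \le \theta_+$. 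One parameter, no perturbation of the ODE, and no balancing.
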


\begin{proof}
  By the assumption on the width of $M$, we fix a small $\varepsilon > 0$ such
  that $\ensuremath{\operatorname{width}} (M, g) > t_+ - t_- + 2 \varepsilon$.
  We use a lemma due to Zhu {\cite[Lemma 4.1]{zhu-width-2021}} to get a
  surjective smooth function
  \[ \phi : (M, g) \to [t_- - \varepsilon, t_+ + \varepsilon] \]
  with $\ensuremath{\operatorname{Lip}} (\phi) < 1$ and such that $\phi^{- 1}
  (t_- - \varepsilon) = \partial_-M$ and $\phi^{- 1} (t_+ + \varepsilon) =
  \partial_+M$. We define
\begin{equation*}
  p(x)=\eta\circ\phi(x),\quad x\in M
\end{equation*}
  where $\eta=\eta(t)$ is the function satisfying the ordinary differential equation
  \eqref{ode} and $\eta'(t)<0$. Then
\begin{align}
\begin{split}
& \tfrac{n}{n - 1} p^2 - 2 p\ensuremath{\operatorname{tr}}_g k - 2 |\nabla p|
\\
\geq & \tfrac{n}{n - 1} p^2 - 2 p\ensuremath{\operatorname{tr}}_g k -
2 | \eta' | \ensuremath{\operatorname{Lip}} (\phi) \\
= & - \sigma - 2 \eta' \circ \phi + 2 | \eta' \circ \phi |
\ensuremath{\operatorname{Lip}} (\phi) \\
= & - \sigma - 2 \eta' \circ \phi (1 -\ensuremath{\operatorname{Lip}}
\phi) \\
> & - \sigma,
\end{split}
\end{align}
  where the last inequality follows from $\eta' < 0$. So \eqref{most strict
  dec} is true because of \eqref{dominant scalar bound}. Furthermore, since
  the function $\eta$ is monotonically decreasing, we have the strict
  expansion bound \eqref{strict expansion bound} of the boundaries from the
  bounds \eqref{expansion bound}.
\end{proof}

Now we can prove Theorem \ref{width estimate positive}.

\begin{proof}[Proof of Theorem \ref{width estimate positive} when $\dim M
\leq 7$]
  We argue by contradiction, assume that there exists a small $\varepsilon >
  0$ such that $\ensuremath{\operatorname{width}} (M, g) > t_+ - t_- + 2
  \varepsilon$. We can invoke Lemma \ref{construction of h} to construct an
  $p$ satisfying \eqref{most strict dec} and \eqref{strict expansion bound}.
  
  By the bounds \eqref{strict expansion bound}, we can apply Eichmair
  {\cite{eichmair-plateau-2009}} (see Remark \ref{transfer}) to find a closed hypersurface $\Sigma$ of
  prescribed null expansion $p$ which is homologous to $\partial_{\pm}M$ and
  stable. Note that also $\partial_{\pm}M \cap \Sigma = \emptyset$. Because of
  \eqref{most strict dec} in Lemma \ref{construction of h}, we can apply
  Theorem \ref{topology} to conclude that $\Sigma$ must admit a metric of
  positive scalar curvature. However, this contradicts the construction of
  $\Sigma$ which says that $\Sigma$ is topologically $\mathbb{T}^{n - 1}$.
\end{proof}

\begin{remark}
  In fact, Theorems \ref{width estimate positive} can be generalized to the
  cases with a general warped product. See {\cite{rade-scalar-2021}}. The proofs are similar, and we leave the
  details to the readers. A general version is also available via the spinor methods \cite{cecchini-scalar-2021-arxiv}.
\end{remark}

We are not able to show a rigidity. We wish to apply {\cite[Theorem
1.2]{eichmair-initial-2021}} for the initial data set $(M, g, k - \tfrac{1}{n
- 1} \eta\circ\phi g)$ with $\phi$ constructed later in Lemma \ref{rigid width function}. However, $\eta\circ\phi$ is only Lipschitz. The
paper {\cite{eichmair-initial-2021}} used {\cite[Theorem
5.1]{andersson-area-2009}} in which they required that $\eta\circ\phi$ can be
differentiated at least twice.

The rigidity is true for $k = \tfrac{1}{n - 1} \lambda g$ via spinorial
techniques {\cite{cecchini-scalar-2021-arxiv}} because the dominant energy
scalar $\mu - |J| \geq \tfrac{1}{2} \sigma$ is just a scalar curvature
bound, the bounds \eqref{expansion bound} are just bounds on the mean
curvatures of $\partial_{\pm}M$ and everything reduces to the cases by
{\cite{gromov-metric-2018}}, {\cite{cecchini-scalar-2021-arxiv}} and
{\cite{rade-scalar-2021}}.

We mention how to generalize of the Theorems \ref{width estimate positive}
when $\ensuremath{\operatorname{tr}}_g k$ is not constant. Let $\lambda =
\inf_M \ensuremath{\operatorname{tr}}_g k$ and $\Lambda = \sup_M
\ensuremath{\operatorname{tr}}_g k$. For example, we require that $\eta
(t_{\pm}) > 0$, then
\begin{equation}
  - 2 \eta \ensuremath{\operatorname{tr}}_g k \geq - 2 \eta \Lambda .
\end{equation}
We get a solution $\eta$ of \eqref{ode 0} by replacing $\lambda$ by $\Lambda$.
Then we can state a slight generalization of Theorem \ref{width estimate
positive}. In the following theorem, we require that $\sigma > \tfrac{n -
1}{n} \Lambda^2$.

\begin{theorem}
  Given real constants $\Lambda$, $\sigma$ with $\sigma > \tfrac{n - 1}{n}
  \Lambda^2$, define
  \[ \eta (t) = \tfrac{n - 1}{n} \Lambda - \sqrt{\tfrac{n - 1}{n} \left(
     \sigma - \tfrac{n - 1}{n} \Lambda^2 \right)} \tan \left( \tfrac{n}{2 (n -
     1)} \sqrt{\tfrac{n - 1}{n} (\sigma - \tfrac{n - 1}{n} \Lambda^2)} t
     \right) . \]
  Let $M =\mathbb{T}^{n - 1} \times [- 1, 1]$, assume on $(M, g, k)$ that
  $\ensuremath{\operatorname{tr}}_g k \leq \Lambda$,
  \[ \mu - |J| \geq \tfrac{1}{2} \sigma \]
  and the null expansions at the boundaries $\partial_{\pm}M$ satisfy
  \[ \theta_- \leq \eta (t_-), \theta_+ \geq \eta (t_+), \]
  where $t_{\pm}$ are two numbers with $\eta (t_{\pm}) > 0$. Then
  \[ \ensuremath{\operatorname{width}} (M, g) \leq t_+ - t_- . \]
\end{theorem}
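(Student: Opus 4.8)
The plan is to reduce this statement to the already-proved Theorem~\ref{width estimate positive} by a careful bookkeeping of signs, exactly in the spirit of the informal discussion that precedes it. First I would observe that the quantity $\eta(t)$ displayed in the statement is precisely the solution $\eta_0$ of the ODE \eqref{ode 0} with $\lambda$ replaced by $\Lambda$ in the case $\sigma > \tfrac{n-1}{n}\Lambda^2$, i.e.\ formula \eqref{eta greater} from the Appendix; in particular it satisfies $\sigma + \tfrac{n}{n-1}\eta^2 - 2\eta\Lambda + 2\eta' = 0$ with $\eta'<0$ on its domain. Since $\eta(t_\pm)>0$ is assumed and $\eta$ is monotonically decreasing, we automatically have $\eta(t)>0$ for all $t\in[t_-,t_+]$, which is the property we will exploit.

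Next I would run the construction of Lemma~\ref{construction of h} verbatim, only replacing the ODE \eqref{ode 0} (with $\lambda$) by the same ODE with $\Lambda$. That is, assuming for contradiction that $\operatorname{width}(M,g) > t_+ - t_- + 2\varepsilon$, I invoke Zhu's Lemma to get a $1$-Lipschitz surjection $\phi:(M,g)\to[t_--\varepsilon,t_++\varepsilon]$ sending $\partial_\mp M$ to the endpoints, and set $p = \eta\circ\phi$. The only place where the hypothesis $\operatorname{tr}_g k = \lambda$ was used in Lemma~\ref{construction of h} is the step
\[
-2p\operatorname{tr}_g k + \tfrac{n}{n-1}p^2 = -\sigma - 2\eta'\circ\phi ,
\]
which used the ODE. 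Here instead, since $p = \eta\circ\phi > 0$ on $M$ and $\operatorname{tr}_g k \le \Lambda$, we get $-2p\operatorname{tr}_g k \ge -2p\Lambda$, and therefore
\[
\tfrac{n}{n-1}p^2 - 2p\operatorname{tr}_g k - 2|\nabla p|
\ \ge\ \tfrac{n}{n-1}p^2 - 2p\Lambda - 2|\eta'|\operatorname{Lip}(\phi)
\ =\ -\sigma - 2\eta'\circ\phi\,(1 - \operatorname{Lip}\phi)\ >\ -\sigma ,
\]
using the $\Lambda$-ODE and $\eta'<0$. Combined with $\mu - |J| \ge \tfrac12\sigma$ this yields the strict relaxed dominant energy condition \eqref{most strict dec}, and monotonicity of $\eta$ together with \eqref{expansion bound} gives the strict boundary expansion bounds \eqref{strict expansion bound}, just as before. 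The positivity $p>0$ is what makes the sign flip in the $\operatorname{tr}_g k$ term go the right way, so it is essential that we work in the regime $\eta(t_\pm)>0$.

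With \eqref{most strict dec} and \eqref{strict expansion bound} in hand, the rest is identical to the proof of Theorem~\ref{width estimate positive}: by Remark~\ref{transfer} one passes to $(M,g,k-\tfrac{1}{n-1}pg)$, applies Eichmair's existence result \cite{eichmair-plateau-2009} to produce a closed stable hypersurface $\Sigma$ of prescribed null expansion $p$ homologous to $\partial_\pm M$ and disjoint from them, and then Theorem~\ref{topology} forces $\Sigma$ to carry a metric of positive scalar curvature; but $\Sigma$ is homologous to $\mathbb{T}^{n-1}$, a contradiction. I expect no genuine obstacle here — the content is purely the sign analysis in the middle paragraph, and the only mild subtlety is confirming that $\eta$ (hence $p$) stays positive on the whole interval $[t_-,t_+]$, which follows from monotonicity plus $\eta(t_\pm)>0$; one should also note the dimension restriction $\dim M \le 7$ is inherited from Eichmair's regularity theory, exactly as in Theorem~\ref{width estimate positive}, or else one appeals to the spinorial argument of Section~\ref{spinor proof}.
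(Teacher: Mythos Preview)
Your proposal is correct and follows exactly the approach the paper sketches in the paragraph preceding the theorem: use the positivity of $\eta$ to replace $-2\eta\operatorname{tr}_g k$ by $-2\eta\Lambda$, then run the proof of Theorem~\ref{width estimate positive} verbatim with the $\Lambda$-ODE. One small point worth tightening: you need $p=\eta\circ\phi>0$ on all of $M$, and $\phi$ takes values in $[t_--\varepsilon,\,t_++\varepsilon]$, not just $[t_-,t_+]$; since $\eta(t_+)>0$ is strict and $\eta$ is continuous, shrinking $\varepsilon$ at the outset guarantees $\eta>0$ on the enlarged interval as well.
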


\section{Proof via a perturbation of the spacetime harmonic
function}\label{spacetime harmonic function proof}

Stern {\cite{stern-scalar-2019}} initiated an approach using a harmonic map to
prove that there does not exist a metric of positive scalar curvature on
$\mathbb{T}^3$. The approach has been generalized to the spacetime by
{\cite{bray-harmonic-2022,hirsch-spacetime-2020}} to show a spacetime positive mass theorem. See
{\cite{bray-spacetime-2021}} more related results. We assume that $M
=\mathbb{T}^2 \times [- 1, 1]$ in this section. We recall the integral
inequality.

\begin{lemma}[{\cite{hirsch-spacetime-2020}}]
  If
  \begin{equation}
    \Delta_g u +\ensuremath{\operatorname{tr}}_g k |\nabla u| = 0 \label{spacetime
    harmonic function}
  \end{equation}
  on $M$, then $u \in C^{2,\alpha} \cap W^{3, p}_{\ensuremath{\operatorname{loc}}}$ and
  satisfies
  \begin{equation}
    \int_{\partial_{\pm M}} \pm \partial_{\nu \pm} |\nabla u| + k (\nabla u, \pm
    \nu_{\pm}) \geq \int_{\underline{u}}^{\bar{u}} \mathrm{d} s \int_{\Sigma_s}
    [\tfrac{1}{2} \tfrac{|\nabla^2 u - k_{i j} |\nabla u||^2}{|\nabla u|^2} + \mu + J
    (\tfrac{\nabla u}{|\nabla u|}) - K_{\Sigma_s}], \label{integral inequality of
    spacetime harmonic function}
  \end{equation}
  where $\Sigma_s$ are level sets of $u$, $K_{\Sigma_s}$ is its Gauss
  curvature, $\bar{u} = \sup_M u$ and $\underline{u} = \inf_M u$.
\end{lemma}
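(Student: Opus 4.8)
The plan is to treat the equation first as a semilinear elliptic equation in order to bootstrap the regularity of $u$, and then to convert the Bochner formula for $u$ into a level set identity by means of the Gauss equation and the Einstein constraints, which is finally integrated against the co-area measure of $u$, with care taken at the critical set $\{\nabla u=0\}$.

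For the regularity, write the equation as $\Delta_g u=-\operatorname{tr}_g k\,|\nabla u|$. Since $|\nabla u|$ is bounded, the right-hand side lies in $L^p_{\mathrm{loc}}$ for every $p<\infty$, so Calder\'on--Zygmund theory gives $u\in W^{2,p}_{\mathrm{loc}}\subset C^{1,\alpha}$, whence $|\nabla u|\in C^{0,\alpha}$; as $\operatorname{tr}_g k$ is smooth the right-hand side is then $C^{0,\alpha}$, and Schauder theory gives $u\in C^{2,\alpha}$. Consequently $|\nabla u|$ is locally Lipschitz with $|\nabla|\nabla u||\le|\nabla^2 u|$ almost everywhere, so $|\nabla u|\in W^{1,p}_{\mathrm{loc}}$; feeding this back, $\Delta_g u\in W^{1,p}_{\mathrm{loc}}$ and hence $u\in W^{3,p}_{\mathrm{loc}}$. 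Away from $\{\nabla u=0\}$ the equation is uniformly elliptic with smooth coefficients, so $u$ is smooth there; and by a Morse--Sard theorem for $W^{3,p}$ functions (with $p$ large), almost every $s$ is a regular value of $u$, so $\Sigma_s=u^{-1}(s)$ is then a $C^2$ hypersurface contained in $\{\nabla u\neq0\}$ on which $K_{\Sigma_s}$ is defined.

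On $U=\{\nabla u\neq0\}$ write $\varphi=|\nabla u|$ and $\nu=\nabla u/\varphi$. The next step is to start from the Bochner identity $\tfrac12\Delta_g\varphi^2=|\nabla^2 u|^2+\langle\nabla u,\nabla\Delta_g u\rangle+\operatorname{Ric}(\nabla u,\nabla u)$, substitute $\Delta_g u=-\operatorname{tr}_g k\,\varphi$, split $\nabla^2 u$ relative to $\Sigma_s$ (its $\Sigma_s$-tangential block is $\varphi$ times the second fundamental form of $\Sigma_s$, its mixed block is $\nabla^{\Sigma_s}\varphi$, and its normal component is $\nu(\varphi)$), use the Gauss equation of $\Sigma_s$ in $(M,g)$ to eliminate $\operatorname{Ric}(\nu,\nu)$ in favour of $R_g$, $K_{\Sigma_s}$ and the second fundamental form of $\Sigma_s$, and then use $2\mu=R_g-|k|_g^2+(\operatorname{tr}_g k)^2$ and $J=\operatorname{div}_g k-\mathrm{d}\operatorname{tr}_g k$ to replace $R_g$, $\operatorname{tr}_g k$ and $\operatorname{div}_g k$ by $\mu$ and $J$. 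After the cancellations (the $(\operatorname{tr}_g k)^2$-terms and the linear cross-terms drop out and the remaining $k$-dependence assembles into a completed square) one is left, on $U$, with the pointwise identity
\begin{equation*}
\operatorname{div}_g\!\big(\nabla|\nabla u|+k(\nabla u,\cdot)^{\sharp}\big)=|\nabla u|\Big(\tfrac12\tfrac{|\nabla^2 u-|\nabla u|\,k|^2}{|\nabla u|^2}+\mu+J(\nu)-K_{\Sigma_s}\Big),
\end{equation*}
where $k(\nabla u,\cdot)^{\sharp}$ is the vector field metrically dual to $X\mapsto k(\nabla u,X)$ and $|\nabla^2 u-|\nabla u|\,k|$ is the norm of the $(0,2)$-tensor $\nabla^2 u-|\nabla u|\,k$ (the exact sign inside this tensor being dictated by the sign convention for $k$).

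Finally, I would integrate this identity over $U_\varepsilon=\{\varphi>\varepsilon\}$ and apply the divergence theorem. The part of $\partial U_\varepsilon$ lying in $\partial M$ contributes $\int_{\partial_\pm M}(\pm\partial_{\nu_\pm}|\nabla u|+k(\nabla u,\pm\nu_\pm))$ (the outward unit normal of $M$ being $\nu_+$ on $\partial_+M$ and $-\nu_-$ on $\partial_-M$), which is precisely the left-hand side of the Lemma; the remaining part $\{\varphi=\varepsilon\}$ contributes $-\int_{\{\varphi=\varepsilon\}}\big(|\nabla\varphi|+\varphi\,k(\nu,\nabla\varphi)/|\nabla\varphi|\big)$, whose $\limsup$ along a suitable sequence $\varepsilon\to0$ is $\le0$ (the first summand has a sign and the second is $O(\varepsilon)$ since $\varphi\equiv\varepsilon$ there). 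On the other hand, the co-area formula turns $\int_{U_\varepsilon}\operatorname{div}_g(\cdots)\,\mathrm{d}V$ into $\int_{\underline{u}}^{\bar{u}}\mathrm{d}s\int_{\Sigma_s\cap U_\varepsilon}\big(\tfrac12|\nabla^2 u-|\nabla u|\,k|^2/|\nabla u|^2+\mu+J(\nu)-K_{\Sigma_s}\big)$, which converges to the right-hand side of the Lemma as $\varepsilon\to0$ once one knows that $|\nabla^2 u|^2/|\nabla u|$ and the remaining integrands lie in $L^1(M)$; this is where the $W^{3,p}_{\mathrm{loc}}$ regularity enters. Letting $\varepsilon\to0$ then gives the asserted inequality. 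I expect the main obstacle to be exactly this last step, the analysis at the critical set $\{\nabla u=0\}$: establishing the integrability that licenses the co-area passage, controlling the flux through $\{\varphi=\varepsilon\}$, and making sure enough level sets are $C^2$ for $K_{\Sigma_s}$ (and, in the intended application, for Gauss--Bonnet on $\Sigma_s$) to be meaningful. It is in this limiting argument, and not in the pointwise identity, that one is left with an inequality rather than an equality.
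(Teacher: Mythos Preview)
The paper does not supply its own proof of this lemma: it is simply quoted from \cite{hirsch-spacetime-2020} (the spacetime extension of Stern's level-set technique) and used as a black box in the three-dimensional proof of Theorem~\ref{width estimate positive}. Your outline is essentially the argument of that reference: bootstrap regularity by Calder\'on--Zygmund and Schauder estimates; on $\{|\nabla u|>0\}$ combine the Bochner formula, the Gauss equation of the level sets, and the constraint identities for $\mu$ and $J$ to obtain the pointwise divergence identity you wrote down; then integrate over $\{|\nabla u|>\varepsilon\}$, apply the divergence theorem and the co-area formula, and send $\varepsilon\to 0$. Your identification of where the inequality (as opposed to equality) enters --- the nonpositive flux through $\{|\nabla u|=\varepsilon\}$ --- and of the main technical issue (integrability of $|\nabla^2 u|^2/|\nabla u|$ and regularity of generic level sets near the critical set) is accurate and is precisely where the work in \cite{hirsch-spacetime-2020} lies.
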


\begin{proof}[Proof of Theorem \ref{width estimate positive} in dimension 3]
  Let $p$ be the function constructed in Lemma \ref{construction of h},
  $\tilde{k} := k - \tfrac{1}{2} p g$ and $u$ be a solution of
  \begin{equation}
    \Delta_g u +\ensuremath{\operatorname{tr}}_g \tilde{k} |\nabla u| = \Delta_g u
    + (\ensuremath{\operatorname{tr}}_g k - \tfrac{3}{2} p) |\nabla u| = 0 \text{
    in } M \label{perturbation of spacetime harmonic}
  \end{equation}
  and $u = {\pm}1$ on $\partial_{\pm}M$. Replacing the $k$, $\mu$, $J$ by
  $\tilde{k}$, $\tilde{\mu}$, $\tilde{J}$ in \eqref{integral inequality of
  spacetime harmonic function}, by an easy calculation,
  \begin{equation}
    \tilde{\mu} = \mu + \tfrac{1}{2} (\tfrac{3}{2} p^2 - 2
    p\ensuremath{\operatorname{tr}}_g k),\quad \tilde{J} = J + \nabla p.
    \label{perturbed mu J}
  \end{equation}
  So
\begin{align}
\begin{split}
& \tilde{\mu} + J (\tfrac{\nabla u}{|\nabla u|}) \\
= & \mu + J (\tfrac{\nabla u}{|\nabla u|}) + \tfrac{1}{2} (\tfrac{3}{2} p^2 - 2
p\ensuremath{\operatorname{tr}}_g k + 2 \langle \nabla p, \tfrac{\nabla u}{|\nabla u|}
\rangle) \\
\geq & \mu - |J| + \tfrac{1}{2} (\tfrac{3}{2} p^2 - 2
p\ensuremath{\operatorname{tr}}_g k - 2| \nabla p|) \\
> & 0,
\end{split}
\end{align}
  by \eqref{most strict dec}. Each regular level set of $u$ in \eqref{perturbation of
  spacetime harmonic} is copies of topological torus, applying Gauss-Bonnet
  theorem on each regular level set, we have the right hand side of
  \eqref{integral inequality of spacetime harmonic function} is strictly
  positive.
  
  We study now the boundary terms in \eqref{integral inequality of spacetime
  harmonic function}. On $\partial_+M$, $D u$ is nonzero and $\tfrac{D u}{|D u|}$ points outward by the strong
  maximum principle, so $\nabla u = |\nabla u| \nu_+$. Using \eqref{spacetime harmonic
  function} and the decomposition of the Laplacian $\Delta_g$,
  \[ (-\ensuremath{\operatorname{tr}}_g k + \tfrac{3}{2} p) |\nabla u| = \Delta_g u
     = H_+  \langle \nabla u, \nu_+ \rangle + (\nabla^2 u) (\nu_+, \nu_+) . \]
  So
\begin{align}
\begin{split}
& \partial_{\nu_+} |\nabla u| + \tilde{k} (\nabla u, \nu_+) \\
= & \tfrac{1}{|\nabla u|} (\nabla^2 u) (\nabla u, \nu_+) + |\nabla u| k (\nu_+, \nu_+) -
\tfrac{1}{2} p |\nabla u| \\
= & (\nabla^2 u) (\nu_+, \nu_+) + |\nabla u| k (\nu_+, \nu_+) - \tfrac{1}{2} p |\nabla u|
\\
= & - H_+  \langle \nabla u, \nu_+ \rangle + (-\ensuremath{\operatorname{tr}}_g
k + \tfrac{3}{2} p) |\nabla u| + |\nabla u| k (\nu_+, \nu_+) - \tfrac{1}{2} p |\nabla u|
\\
= & - H_+ |\nabla u| -\ensuremath{\operatorname{tr}}_{\partial_+} k |\nabla u| + p
|\nabla u| \\
= & (- \theta_+ + p) |\nabla u| .
\end{split}
\end{align}
  Similarly on $\partial_-M$, $D u = |\nabla u| \nu_-$ and
  \begin{equation}
    \partial_{\nu_-} |\nabla u| + \tilde{k} (\nabla u, \nu_-) = - (\theta_- - p) |\nabla u| .
  \end{equation}
  So the left hand side of \eqref{integral inequality of spacetime harmonic
  function}
  \begin{equation}
    \int_{\partial_{\pm}M} \pm \partial_{\nu \pm} |\nabla u| + k (\nabla u, \pm
    \nu_{\pm}) = \int_{\partial_{\pm}M} \mp (\theta_{\pm} - p) |\nabla u| < 0
  \end{equation}
  is negative by \eqref{strict expansion bound}. We have a contradiction. So
  we have the width of $(M, g, k)$ is less than $t_+ - t_-$.
\end{proof}

\begin{remark}
  The band width estimate via the spacetime harmonic function of the case $k =
  0$ is due to the work (in preparation) of S. Hirsch, D. Kazaras, M. Khuri and
  Y. Zhang.
\end{remark}
Now we turn to study the rigidity.

\begin{lemma}
  \label{rigid width function}If $\ensuremath{\operatorname{width}} (M, g) =
  t_+ - t_-$, then there exists a function $\phi : M \to [t_-, t_+]$ such that
  $\phi (\partial_{\pm}M) = t_{\pm}$ with $\ensuremath{\operatorname{Lip}}
  (\phi) \leq 1$.
\end{lemma}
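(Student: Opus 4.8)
The statement is a standard "distance function from the boundary" construction, so the natural approach is to build $\phi$ directly out of the signed distance to $\partial_-M$ and then clip it to the interval $[t_-,t_+]$.  Concretely, let $\rho(x)=\operatorname{dist}_g(x,\partial_-M)$, which is $1$-Lipschitz on all of $M$.  Because $\operatorname{width}(M,g)=\operatorname{dist}_g(\partial_-M,\partial_+M)=t_+-t_-$, we have $0\le\rho\le t_+-t_-$ on $M$, with $\rho=0$ exactly on $\partial_-M$ and $\rho=t_+-t_-$ somewhere on $\partial_+M$ (indeed $\rho\ge t_+-t_-$ on $\partial_+M$, hence $\equiv t_+-t_-$ there).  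Then set
\[
  \phi(x) = t_- + \rho(x) = t_- + \operatorname{dist}_g(x,\partial_-M).
\]
This is a $1$-Lipschitz function $M\to[t_-,t_+]$ with $\phi(\partial_-M)=\{t_-\}$ and $\phi(\partial_+M)=\{t_+\}$, which is exactly the claim.

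First I would record why $\rho\ge t_+-t_-$ on $\partial_+M$: any $x\in\partial_+M$ satisfies $\operatorname{dist}_g(x,\partial_-M)\ge\operatorname{dist}_g(\partial_+M,\partial_-M)=\operatorname{width}(M,g)=t_+-t_-$ by definition of the width and of distance between sets; combined with the upper bound $\rho\le\operatorname{dist}_g(\partial_+M,\partial_-M)$ coming from the reverse triangle inequality (any point is within the width of the near boundary, using that $M$ is connected and the two boundary components are the only ones), we get $\rho\equiv t_+-t_-$ on $\partial_+M$.  Next I would note that $\rho$ is $1$-Lipschitz because the distance function to any fixed set is always $1$-Lipschitz with respect to $g$.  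The additive shift by the constant $t_-$ changes neither the Lipschitz constant nor anything else, and it places the range in $[t_-,t_+]$ with the correct boundary values.  That finishes the proof; no regularization is needed since only a Lipschitz bound is asserted.

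**Main obstacle.**  There is essentially no analytic difficulty here; the only thing to be careful about is the two-sided nature of the bound on $\rho$ at $\partial_+M$.  The inequality $\rho\ge t_+-t_-$ on $\partial_+M$ is immediate, but to conclude $\phi$ actually maps \emph{into} $[t_-,t_+]$ — i.e.\ that $\rho\le t_+-t_-$ everywhere on $M$ — one must use that $M=\mathbb{T}^{n-1}\times[-1,1]$ is a band whose only two boundary components are $\partial_\pm M$, so that every interior point lies on a path realizing (up to arbitrarily small error) a distance at most $\operatorname{width}(M,g)$ to one of the two boundaries; more simply, every point $x$ satisfies $\operatorname{dist}_g(x,\partial_-M)+\operatorname{dist}_g(x,\partial_+M)\le$ (length of a vertical segment through $x$), and one checks this total is $\le$ diameter-type bounds that do not by themselves give $t_+-t_-$.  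The clean argument is instead: $\operatorname{dist}_g(x,\partial_-M)\le \operatorname{dist}_g(\partial_+M,\partial_-M)+\operatorname{dist}_g(x,\partial_+M)$ is the wrong direction, so one should simply define $\phi(x)=\min\{\,t_- + \operatorname{dist}_g(x,\partial_-M),\ t_+\,\}$, i.e.\ truncate at $t_+$; the truncation of a $1$-Lipschitz function at a constant is still $1$-Lipschitz, still equals $t_-$ on $\partial_-M$ (where $\rho=0<t_+-t_-$ since the width is positive), and equals $t_+$ on $\partial_+M$ by the lower bound on $\rho$ there.  With this truncation the construction is completely robust, and this is the version I would write up.
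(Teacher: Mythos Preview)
Your proposal is correct and, after the truncation you settle on, coincides exactly with the paper's proof: the paper also defines $\phi(x)=\min\{t_-+\operatorname{dist}(x,\partial_-M),\,t_+\}$ and checks $\operatorname{Lip}(\phi)\le 1$, $\phi(\partial_-M)=t_-$, and $\phi(\partial_+M)=t_+$ via $\operatorname{dist}(x,\partial_-M)\ge t_+-t_-$ for $x\in\partial_+M$. Your discussion correctly isolates the one subtle point (the upper bound on $\rho$) and resolves it the same way the paper does.
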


\begin{proof}
  Let
  \[ \phi (x) = \min \{t_- +\ensuremath{\operatorname{dist}}(x, \partial_-M),
     t_+ \} . \]
  It is easy to see that $\ensuremath{\operatorname{Lip}} (\phi) \leq 1$
  and $\phi (\partial_-M) = t_-$. Since $\ensuremath{\operatorname{width}} (M,
  g) = t_+ - t_-$, so the distance of any $x \in \partial_+M$ to the boundary
  $\partial_-M$ is greater or equal to $t_+ - t_-$. Therefore
  \[ t_- +\ensuremath{\operatorname{dist}} (x, \partial_-M) \geq t_- + t_+
     - t_- = t_+, \]
  implying that $\phi (\partial_+M) = t_+$.
\end{proof}

\begin{remark}
  We can also apply the Arzela-Ascoli lemma to a family of smooth functions
  constructed in {\cite[Lemma 4.1]{zhu-width-2021}} to obtain a $\phi$
  with the same properties. It is not as explicit.
\end{remark}

With the help of Lemma \ref{rigid width function}, we have a rigidity result.

\begin{theorem}
  \label{rigidity positive case}Assume that $(M^3, g, k)$ satisfies the
  conditions in Theorem \ref{width estimate positive} and moreover
  $\ensuremath{\operatorname{width}} (M, g) = t_+ - t_-$. Then $\phi$
  constructed in Lemma \ref{rigid width function} is
  \[ \phi (x) =\ensuremath{\operatorname{dist}} (x, \partial_-M) + t_- . \]
\end{theorem}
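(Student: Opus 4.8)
The plan is to remove the minimum from the formula for $\phi$ in Lemma~\ref{rigid width function}: writing $d_-:=\operatorname{dist}(\cdot,\partial_-M)$, the assertion is equivalent to $d_-\le t_+-t_-$ on all of $M$, i.e.\ to the emptiness of the open set $U:=\{x\in M:d_-(x)>t_+-t_-\}$, which misses $\partial_-M$ and on which $\phi\equiv t_+$. I would deduce this from the equality case of the dimension-three proof of Theorem~\ref{width estimate positive}.

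Since $\operatorname{width}(M,g)=t_+-t_-$, do not go through Lemma~\ref{construction of h}; instead take $p:=\eta\circ\phi$ with $\phi$ as in Lemma~\ref{rigid width function} (so $\operatorname{Lip}\phi\le1$) and $\eta$ the solution of \eqref{ode 0}. Using the CMC hypothesis $\operatorname{tr}_g k=\lambda$ and \eqref{ode 0} to rewrite $\tfrac{n}{n-1}p^2-2p\operatorname{tr}_g k=-\sigma-2(\eta'\circ\phi)$, the computation in the proof of Lemma~\ref{construction of h} becomes the sharp identity
\begin{equation*}
  \tfrac{n}{n-1}p^2-2p\operatorname{tr}_g k-2|\nabla p|=-\sigma-2(\eta'\circ\phi)\bigl(1-|\nabla\phi|\bigr)\ \ge\ -\sigma\quad\text{a.e.},
\end{equation*}
with equality exactly where $|\nabla\phi|=1$ (recall $\eta'<0$). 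Set $\tilde k:=k-\tfrac12 p g$ with $\tilde\mu,\tilde J$ as in \eqref{perturbed mu J}, and let $u$ solve \eqref{perturbation of spacetime harmonic} with $u=\pm1$ on $\partial_\pm M$; the merely Lipschitz coefficient still yields $u\in C^{2,\alpha}\cap W^{3,p}_{\mathrm{loc}}$, so the integral identity \eqref{integral inequality of spacetime harmonic function} applies to $(\tilde k,\tilde\mu,\tilde J)$. By \eqref{expansion bound} and $\phi(\partial_\pm M)=t_\pm$ its boundary integral is $\le0$; every regular level set of $u$ is a union of tori, so $\int_{\Sigma_s}K_{\Sigma_s}=0$ by Gauss--Bonnet, the Gauss-curvature term drops out level set by level set, and what remains is the pointwise nonnegative integrand made of the Hessian term and $\tilde\mu+\tilde J(\tfrac{\nabla u}{|\nabla u|})\ge0$. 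Hence $0\ge(\text{boundary integral})=(\text{interior integral})\ge0$ and every inequality is an equality.

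Among the equalities read off, the crucial one is that the chain
\begin{equation*}
  0=\tilde\mu+\tilde J\!\left(\tfrac{\nabla u}{|\nabla u|}\right)\ \ge\ \mu-|J|+\tfrac12\Bigl(\tfrac{n}{n-1}p^2-2p\operatorname{tr}_g k-2|\nabla p|\Bigr)\ \ge\ \tfrac12\sigma-\tfrac12\sigma=0
\end{equation*}
is an equality throughout (a.e.\ on each regular level set), forcing $\mu-|J|=\tfrac12\sigma$ and $\tfrac{n}{n-1}p^2-2p\operatorname{tr}_g k-2|\nabla p|=-\sigma$; by the sharp identity the latter says $|\nabla\phi|=1$ a.e.\ on every regular level set of $u$. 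Now suppose $U\neq\emptyset$ and take a connected component $U_0$. Strong unique continuation for the uniformly elliptic equation \eqref{perturbation of spacetime harmonic} (bounded drift, no zeroth-order term) prevents $u$ from being constant on the open set $U_0$ --- it would otherwise be globally constant, against $u=\pm1$ on $\partial_\pm M$ --- so $u(U_0)$ is a nondegenerate interval and contains a regular value $s_0$. Then $U_0\cap u^{-1}(s_0)$ is a nonempty relatively open subset of the regular level set $\Sigma_{s_0}$, so $|\nabla\phi|=1$ holds at some point of it; but $\phi\equiv t_+$ on $U_0$ forces $\nabla\phi\equiv0$ there --- a contradiction. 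Hence $U=\emptyset$, that is $d_-\le t_+-t_-$ on $M$, and $\phi(x)=t_-+\min\{d_-(x),t_+-t_-\}=\operatorname{dist}(x,\partial_-M)+t_-$.

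The main obstacle is the one flagged before the statement: $p=\eta\circ\phi$ is only Lipschitz. Unlike for the $\mu$-bubble of Section~\ref{pne proof}, this is harmless here because the spacetime harmonic function lives on all of $M$ --- one only needs the identity \eqref{integral inequality of spacetime harmonic function} and its equality discussion to tolerate a Lipschitz $\tilde k$ and an $L^\infty$ field $\nabla p$ inside $\tilde J$, and almost-everywhere conclusions suffice --- whereas on a $\mu$-bubble the quantities $\nu(p)$ and $\nabla^\Sigma p$ are not even defined; if one prefers, mollify $p$ and pass to the limit. A minor secondary point, needed only if one wants the pinched identities $\nabla^2u=\tilde k|\nabla u|$ and $\theta_\pm=\eta(t_\pm)$ to hold everywhere rather than a.e., is the non-vanishing of $\nabla u$ in this rigid case, which follows as in the spacetime positive mass literature from the pinched Hessian identity.
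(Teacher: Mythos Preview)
Your proof is correct and follows essentially the same route as the paper: both set $p=\eta\circ\phi$ with the Lipschitz $\phi$ of Lemma~\ref{rigid width function}, run the integral inequality \eqref{integral inequality of spacetime harmonic function} for $\tilde k=k-\tfrac12 p\,g$, and read off from the equality case that $|\nabla\phi|=1$ almost everywhere. The paper finishes slightly more directly---it extracts $\langle\nabla\phi,\nabla u/|\nabla u|\rangle=1$ (hence $\nabla\phi=\nabla u/|\nabla u|$) and simply declares $\phi$ a distance function---whereas your contradiction via unique continuation on the plateau set $U=\{\phi\equiv t_+\}$ is a more careful way to handle the a.e./Lipschitz issues at that last step.
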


\begin{proof}
  Let $p = \eta \circ \phi$, then the \eqref{modified dec} and the bounds
  \eqref{expansion bound} on the null expansion of the boundaries are
  satisfied. Using the previous proof of Theorem \ref{width estimate positive}
  with a solution of \eqref{perturbation of spacetime harmonic} \ (which does
  not depend on these curvature bounds), we have the following integral
  inequality,
\begin{align}
\begin{split}
0 \geq & \int_{\partial_{\pm}} \mp (\theta_{\pm} - p) |\nabla u|
\\
\geq & \int_{-1}^{1} \int_{\Sigma_t} [\tfrac{1}{2} \tfrac{|\nabla^2 u -
k |\nabla u| + \tfrac{1}{2} p g |^2}{|\nabla u|^2} + \mu + J (\tfrac{\nabla
u}{|\nabla u|}) - K_{\Sigma_t}] \\
& + \int_{-1}^{1} \int_{\Sigma_t} \tfrac{1}{2} (\tfrac{3}{2} p^2 - 2
p\ensuremath{\operatorname{tr}}_g k + 2 \langle \nabla p, \tfrac{\nabla u}{|\nabla u|}
\rangle) \\
\geq & 0,
\end{split}
\end{align}
  by Gauss-Bonnet theorem on the level set $\Sigma_t$. Hence we get
  $\theta_{\pm} = p = \eta (t_{\pm})$ on $\partial_{\pm}M$, $\mu + J (\tfrac{\nabla
  u}{|\nabla u|}) = \tfrac{1}{2} \sigma$ and
  \begin{equation}
    \tfrac{3}{2} p^2 - 2 p\ensuremath{\operatorname{tr}}_g k + 2 \langle \nabla h,
    \tfrac{\nabla u}{|\nabla u|} \rangle = - \sigma .
  \end{equation}
  Since $\eta$ satisfies \eqref{ode}, we deduce that
  \begin{equation}
    \eta' \circ \phi \langle \nabla \phi, \tfrac{\nabla u}{|\nabla u|} \rangle = \langle \nabla p,
    \tfrac{\nabla u}{|\nabla u|} \rangle = \eta' \circ \phi .
  \end{equation}
  By $\eta' < 0$, we have that $\nabla \phi$ is parallel to $\nabla u$ and $|\nabla \phi | =
  1$ proving that $\phi$ is a distance function, and because $\phi
  (\partial_-M) = t_-$, we have $\phi = t_- +\ensuremath{\operatorname{dist}}
  (\cdot, \partial_-M)$.
\end{proof}

Actually, we can assert that each level set of $u$ coincide with a level set
of $\phi$ and each level set is a flat, stable torus of prescribed null
expansion $\eta \circ \phi$. For this, we refer the readers to the work of
Tsang {\cite{tsang-dihedral-2021-arxiv}} on the Gromov diheral rigidity of
initial data sets.

\section{Proof via the Dirac operator}\label{spinor proof}

In this section, following the method in \cite{cecchini-scalar-2021-arxiv}, we give a proof of Theorem \ref{width estimate positive} via the Dirac operator. 
\subsection{Callias operator} In this subsection, we review basics of Callias operators; one can refer to \cite{cecchini-scalar-2021-arxiv}. 

The following definition of the Dirac bundle can be found in \cite[Definition 5.2]{lawson-spin-1989}.
\begin{definition}[Dirac bundle]
A ($\mb{Z}_2$-graded) {\it Dirac bundle} over $M$ is a Hermitian vector bundle $S\to M$ with a metric connection $\n:C^{\infty}(M,S)\to C^{\infty}(M,T^*M\otimes S)$ (endowed with a parallel and orthogonal $\mb{Z}_2$-grading $S=S^+\oplus S^-$) and a parallel bundle map $c:T^*M\to \mr{End}(S)$, called {\it Clifford multiplication}, such that $c(\omega)$ is anti-self-adjoint (and odd), and $c(\omega)^2=-|\omega|^2$ for all $\omega\in T^*M$. 
\end{definition}
To define the Callias operator, we also need the following definitions of relative Dirac bundle and admissible potential, see \cite[Definitions 2.2 and 3.1]{cecchini-scalar-2021-arxiv}.

\begin{definition}[Relative Dirac bundle]
	Let $K\subset M^\circ$ be compact subset in the interior. A {\it relative Dirac bundle} with {\it support} K is a $\mb{Z}_2$-graded Dirac bundle $S\to M$ together with an odd, self-adjoint, parallel bundle involution $\sigma\in C^{\infty}(M\backslash K,\mr{End}(S))$ satisfying $c(\omega)\sigma=-\sigma c(\omega)$ for every $\omega\in T^*M|_{M\backslash K}$ and such that $\sigma$ admits a smooth extension to a bundle map on an open neighborhood of $\o{M\backslash K}$.
\end{definition}

\begin{definition}[Admissible potential]
	A Lipschitz function $\psi:M\to \mb{R}$ is called an {it admissible potential} if $\psi=0$ on $K$ and there exists a compact set $K\subset L\subset M$ such that $\psi$ is equal to a nonzero constant on each component of $M\backslash L$.
\end{definition}

Let $\psi:M\to \mb{R}$ be an admissible potential and  $S\to M$ be a relative Dirac bundle. Then we can define the Callias operator as follows, see \cite[(3.1)]{cecchini-scalar-2021-arxiv}.

\begin{definition}[Callias operator]
The {\it Callias operator} is defined as 
\[
  \mc{B}_\psi:=D+\psi\sigma,
\]
where $\psi$ is an admissible potential, and $D$ is the Dirac operator associated to the metric connection $\n$.
\end{definition}

\begin{example}[{\cite[Example 2.6]{cecchini-scalar-2021-arxiv}}]\label{ex1}
Let $(M,g)$ be an $n$-dimensional Riemannian spin band and let $\s{S}_M\to M$ be the associated complex spinor bundle endowed with the connection induced by the Levi-Civita connection.  Let $E\to M$ be a Hermitian bundle equipped with a metric connection. Then $S:=(\s{S}_M\otimes E)\oplus (\s{S}_M\otimes E)$ is a $\mb{Z}_2$-graded Dirac bundle with Clifford multiplication 
\begin{equation*}
  c:=\begin{pmatrix}
  0&c_{\s{S}}\otimes \operatorname{id}_E \\
  c_{\s{S}}\otimes \operatorname{id}_E&0 
\end{pmatrix}
\end{equation*}
 	Moreover, $S$ turns into a relative Dirac bundle with the involution
 	\begin{equation*}
  \sigma=\begin{pmatrix}
  0&-i \\
  i&0 
\end{pmatrix}
\end{equation*}
globally defined on $M$. The Dirac operator on $S$ is given by
\begin{equation*}
  \mc{D}=\begin{pmatrix}
  0&\s{D}_E \\
  \s{D}_E&0 
\end{pmatrix}
\end{equation*}
where $\s{D}_E:C^\infty(M,\s{S}_M\otimes E)\to C^\infty(M,\s{S}_M\otimes E)$ is the spinor Dirac operator on $(M,g)$ twist with the bundle $E$. Then the curvature term is 
\begin{equation}\label{cur}
  \mc{R}=\frac{\mr{scal}_g}{4}+\mc{R}^E,
\end{equation}
where $\mc{R}^E=\sum_{i<j}c(e^i)c(e^j)(\operatorname{id}_{\s{S}_M}\otimes R^{\n^E})$.
\end{example}
\subsection{Spectral estimates}
Let $(M,g,k)$ be an spin initial data set. Recall that
\begin{equation*}
  2\mu=R_g-|k|^2_g+(\mr{tr}_gk)^2,\quad  J_i=(\mr{div}_g k-d(\mr{tr}_gk))_i.
\end{equation*}
Let $E\to M$ be a Hermitian vector bundle with a metric connection, and let $\n$ be the induced metric connection on the Dirac bundle $S:=(\s{S}_M\otimes E)\oplus(\s{S}_M\otimes E)$. According to Example \ref{ex1}, $S$ turns into a relative  Dirac bundle with involution $\sigma$. The associated Dirac operator is given by
\begin{equation*}
  D=c(e^i)\n_{e_i}.
\end{equation*}
 Now we define a new connection on $S$ by 
 \begin{equation*}
  \wt{\n}_{e_i}=\n_{e_i}-\tfrac{1}{2}k_{ij}c(e^j)\sigma.
\end{equation*}
It is a connection on $S$ since $\tfrac{1}{2}k_{ij}c(e^j)\sigma\in A^1(M,\mr{End}(S))$. Set
\begin{equation*}
  \wt{D}=c(e^i)\wt{\n}_{e_i}.
\end{equation*}
By a direct calculation, one has
\begin{proposition}\label{prop1}
For any $u\in\Gamma(S)$, we have the following
\begin{itemize}
  \item[(i)] $\wt{\n}_{e_i}u=\n_{e_i}u+\frac{1}{2}k_{ji}\sigma c({e}^j)u$, $\wt{D}u=Du+ \tfrac{\mr{tr}_gk}{2}\sigma(u)$,
  \item[(ii)] $\n^*_{e_i}u=-\n_{e_i}u$, $\wt{\n}^*_{e_i}u=-\wt{\n}_{e_i}u-k_{ij}c({e}^j)\sigma(u)$, $D^*u=Du$, $\wt{D}^*u=\wt{D}u$;
  \item[(iii)] $D^2u=\n^*\n u+\mc{R}u$,$\wt{D}^2u=\wt{\n}^*\wt{\n}u+\wt{\mc{R}}u$, $\wt{\mc{R}}=\frac{1}{2}(\mu_E-J_i c(e^i)\sigma)$.
\end{itemize}
where $\mc{R}=\frac{R_g}{4}+\mc{R}^E$ and $\mu_E=\mu+2\mc{R}^E$.
\end{proposition}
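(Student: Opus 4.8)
The plan is to verify Proposition \ref{prop1} by direct computation, treating the three items in order and reducing everything to properties of the Clifford multiplication $c$, the involution $\sigma$, and the relations between $k$, $\mu$, $J$ recorded in the constraint equations. Throughout I would use the standard facts from the Dirac bundle structure: $c(\omega)$ is anti-self-adjoint with $c(\omega)^2=-|\omega|^2$, the metric connection $\n$ satisfies $\n^*_{e_i}=-\n_{e_i}$ (for a local orthonormal frame), $\sigma$ is odd, self-adjoint, parallel, involutive, and anticommutes with $c(\omega)$; these give in particular $(c(e^j)\sigma)^*=\sigma^* c(e^j)^*=-\sigma c(e^j)=c(e^j)\sigma$, so $c(e^j)\sigma$ is \emph{self-adjoint}, and $(c(e^j)\sigma)^2=c(e^j)\sigma c(e^j)\sigma=-c(e^j)^2\sigma^2=|e^j|^2$.

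For item (i), the first identity $\wt{\n}_{e_i}u=\n_{e_i}u+\tfrac12 k_{ji}\sigma c(e^j)u$ is just the definition $\wt\n_{e_i}=\n_{e_i}-\tfrac12 k_{ij}c(e^j)\sigma$ combined with the symmetry $k_{ij}=k_{ji}$ and the anticommutation $c(e^j)\sigma=-\sigma c(e^j)$. Then $\wt D u=c(e^i)\wt\n_{e_i}u=Du-\tfrac12 k_{ij}c(e^i)c(e^j)\sigma u$, and since $k$ is symmetric only the symmetric part of $c(e^i)c(e^j)$ contributes, which is $-\delta^{ij}$; hence $-\tfrac12 k_{ij}c(e^i)c(e^j)\sigma u=\tfrac12 (\mr{tr}_g k)\sigma u$, giving $\wt Du=Du+\tfrac{\mr{tr}_gk}{2}\sigma(u)$. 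For item (ii), $\n^*_{e_i}u=-\n_{e_i}u$ is the compatibility of $\n$ with the metric; for $\wt\n$ one adds the zeroth-order term, and since $-\tfrac12 k_{ij}c(e^j)\sigma$ is self-adjoint (as noted above) its formal adjoint is itself, so $\wt\n^*_{e_i}u=-\n_{e_i}u-\tfrac12 k_{ij}c(e^j)\sigma u=-\wt\n_{e_i}u-k_{ij}c(e^j)\sigma(u)$; the last equality uses the definition of $\wt\n_{e_i}$ once more. The self-adjointness $D^*=D$ is standard, and $\wt D^*=\wt D$ follows since $\wt D=D+\tfrac{\mr{tr}_gk}{2}\sigma$ and $\sigma$ is self-adjoint (one must also note $\mr{tr}_gk$ is real, indeed constant here).

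For item (iii), the identity $D^2=\n^*\n+\mc R$ is the Lichnerowicz–Bochner formula for the Dirac bundle $S$ (the Weitzenböck curvature term $\mc R$ is identified in Example \ref{ex1}). The point is to establish the analogous formula $\wt D^2=\wt\n^*\wt\n+\wt{\mc R}$ and to compute $\wt{\mc R}$. I would expand $\wt D^2=(D+\tfrac{\mr{tr}_gk}{2}\sigma)^2$ and, separately, $\wt\n^*\wt\n$ using $\wt\n_{e_i}=\n_{e_i}-\tfrac12 k_{ij}c(e^j)\sigma$; subtracting $D^2=\n^*\n+\mc R$ leaves $\wt{\mc R}$ as a combination of: the cross terms between $\n$ and the perturbation (which involve $\n_{e_i}(k_{jl})$, hence $\mr{div}_g k$ and $d(\mr{tr}_gk)$, i.e.\ $J$), the square of the perturbation (which involves $|k|_g^2$ and $(\mr{tr}_gk)^2$), and the term $\tfrac14(\mr{tr}_gk)^2\sigma^2=\tfrac14(\mr{tr}_gk)^2$. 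The constraint equations $2\mu=R_g-|k|_g^2+(\mr{tr}_gk)^2$ and $J_i=(\mr{div}_gk-d\,\mr{tr}_gk)_i$ are then exactly what is needed to collapse this mess into $\wt{\mc R}=\tfrac12(\mu_E-J_i c(e^i)\sigma)$ with $\mu_E=\mu+2\mc R^E$. Bookkeeping the many terms correctly — in particular tracking which products of $c(e^i)$'s are symmetrized against $k$ and which antisymmetrized, and matching signs coming from the anticommutation of $\sigma$ with $c$ — is the main obstacle; the algebra is routine but error-prone, and it is precisely here that one should be careful. I would organize the computation by first deriving the general Weitzenböck identity for $\wt\n$ abstractly (for any connection of the form $\n+A$ with $A\in A^1(M,\mr{End}(S))$) and only then substitute $A_i=-\tfrac12 k_{ij}c(e^j)\sigma$, so that the dependence on the constraint data appears cleanly at the last step.
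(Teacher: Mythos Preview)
Your proposal is correct and follows essentially the same route as the paper: the paper simply asserts that Proposition~\ref{prop1} follows ``by a direct calculation'' and notes in the subsequent remark that the identities are formally identical to those for the Dirac--Witten operator (with $\sigma$ playing the role of $c(e^0)$), referring to \cite{hijazi-dirac-witten-2003} for details. Your outline is precisely that direct computation spelled out, and your organizational choice of first writing the Weitzenb\"ock identity for a general perturbation $\nabla+A$ before substituting $A_i=-\tfrac12 k_{ij}c(e^j)\sigma$ is a sound way to keep the bookkeeping under control.
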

\begin{remark}
	The proof of Proposition \ref{prop1} is the same as the identities for the Dirac-Witten operator because the endomorphism $c(e^0)$ is precisely an involution in a relative Dirac bundle. One can refer to \cite[Section III]{hijazi-dirac-witten-2003} for the related calculations of the Dirac-Witten operators.
\end{remark}

The Green's formula for the Dirac operator $\wt{D}$ is given by
$$
\int_{M}\langle\widetilde{D} u, v\rangle=\int_{M}\langle u, \widetilde{D} v\rangle+\int_{\partial M}\left\langle u, c\left(\nu^{b}\right) v\right\rangle,
$$
where $u, v \in C_{c}^{\infty}(M, S)$. Here $\nu$  denotes the normal vector field pointing inward. For the connection $\wt{\n}$, we have the following Green's formula
$$
\int_{M}\left\langle u, \widetilde{\nabla}^{*} \widetilde{\nabla} u\right\rangle=\int_{M}\langle\widetilde{\nabla} u, \widetilde{\nabla} u\rangle+\int_{\partial M}\left\langle u, \widetilde{\nabla}_{\nu} u\right\rangle .
$$
Hence
\begin{align*}
\begin{split}
  \int_M |\wt{D}u|^2&=\int_M \left\langle u,\wt{D}^2u\right\rangle+\int_{\p M} \left\langle u,c(\nu^\flat)\wt{D}u\right\rangle\\
  &=\int_M \left\langle u,\wt{\n}^*\wt{\n}u\right\rangle+\int_M \left\langle u,\wt{\mc{R}}u\right\rangle+\int_{\p M} \left\langle u,c(\nu^\flat)\wt{D}u\right\rangle\\
  &=\int_M |\wt{\n}u|^2+\int_M \left\langle u,\wt{\mc{R}}u\right\rangle+\int_{\p M}\left\langle u,c(\nu^\flat)\wt{D}u+\wt{\n}_{\nu}u\right\rangle.
 \end{split}
\end{align*}
The boundary Dirac operator is defined as follows
\begin{align*}
\begin{split}
  \mc{A}:=\sum_{i=1}^{n-1}c^\p(e^i)\n^\p_{e_i},
 \end{split}
\end{align*}
where $e_n=-\nu$ and $c^\p(e^i)=c(e^i)c(\nu^\flat)$ and $\n^{\p}_{e_i}=\n_{e_i}+\frac{1}{2}c^\p(\n_{e_i}\nu^{\flat})$, see e.g. \cite[Section 2]{cecchini-scalar-2021-arxiv}. Denote by $p$ the second fundamental form on $\p M$, then $\mr{tr}_{\p M}h=\sum_{i=1}^{n-1}\left\langle e_i,\n_{e_i}(-\nu)\right\rangle$.
 Then 
\begin{align*}
\begin{split}
  \mc{A}&=\frac{1}{2}\mr{tr}_{\p M}h-c(\nu^\flat) D-\n_{\nu}\\
  &=\frac{1}{2}\mr{tr}_{\p M}h-c(\nu^b)(\wt{D}-\frac{\mr{tr}_gk}{2}\sigma)-(\wt{\n}_{\nu}+\frac{1}{2}k_{j\nu}c({e}^j)\sigma)\\
  &=\frac{1}{2}\mr{tr}_{\p M}h+\frac{\mr{tr}_gk}{2}c(\nu^\flat)\sigma-\frac{1}{2}k_{\nu\nu}c(\nu^\flat)\sigma-\frac{1}{2}k_{a\nu}c( e^a)\sigma-(c(\nu^\flat)\wt{D}+\wt{\n}_{\nu})\\
  &=\frac{1}{2}\mr{tr}_{\p M}h+\frac{\mr{tr}_{\p M}k}{2}c(\nu^\flat)\sigma-\frac{1}{2}k_{a\nu}c( e^a)\sigma-(c(\nu^\flat)\wt{D}+\wt{\n}_{\nu})
 \end{split}
\end{align*}
which follows that
\begin{align*}
\begin{split}
   \int_M |\wt{D}u|^2&=\int_M |\wt{\n}u|^2+\int_M \left\langle u,\wt{\mc{R}}u\right\rangle\\
   &\quad +\int_{\p M}\left\langle u,(\frac{1}{2}\mr{tr}_{\p M}h+\frac{\mr{tr}_{\p M}k}{2}c(\nu^\flat)\sigma-\frac{1}{2}k_{a\nu}c( e^a)\sigma-\mc{A})u\right\rangle.
 \end{split}
\end{align*}
The Penrose operator of $\wt{\n}$ is defined as 
\begin{align*}
\begin{split}
  \wt{\mc{P}}_\xi u=\wt{\n}_\xi u+\frac{1}{n}c(\xi^\flat) \wt{D}u.
 \end{split}
\end{align*}
The Friedrich inequality is then 
\begin{align*}
\begin{split}
|\wt{\n}u|^2-\frac{1}{n}|\wt{D}u|^2 = |\wt{\mc{P}}u|^2\geq 0,
 \end{split}
\end{align*}
which follows that
\begin{align*}
\begin{split}
 &  \int_M |\wt{D}u|^2=\frac{n}{n-1}\int_M |\wt{\mc{P}}u|^2+\frac{n}{n-1}\int_M \left\langle u,\wt{\mc{R}}u\right\rangle\\
   &+\frac{n}{n-1}\int_{\p M}\left\langle u,(\frac{1}{2}\mr{tr}_{\p M}h+\frac{\mr{tr}_{\p M}k}{2}c(\nu^\flat)\sigma-\frac{1}{2}k_{a\nu}c( e^a)\sigma-\mc{A})u\right\rangle.
 \end{split}
\end{align*}
Let $\psi:M\to\mb{R}$ be a Lipschitz function such that $2\psi+\mr{tr}_gk$ is an admissible potential, then 
\begin{equation*}
{  \mc{B}_{\psi}=\wt{D}+\psi \sigma=D+(\frac{1}{2}\mr{tr}_gk+\psi)\sigma}
\end{equation*}
is a Callias operator.
For any $u\in C^\infty_c(M,\mb{S})$, one has
\begin{multline}
  \int_M | \mc{B}_{\psi}u|^2=\int_M (|\wt{D}u|^2+|\psi|^2|u|^2)\\
  +\int_M\left\langle u,(c(\mathrm{d}\psi)\sigma+\psi (\mr{tr}_gk) )u\right\rangle+\int_{\p M}\left\langle u,\psi c(\nu^b)\sigma(u)\right\rangle.
\end{multline}
Thus 
\begin{align*}
\begin{split}
&   \int_M | \mc{B}_{\psi}u|^2=\frac{n}{n-1}\int_M |\wt{\mc{P}}u|^2+\frac{n}{n-1}\int_M \left\langle u,\wt{\mc{R}}u\right\rangle\\
   & +\frac{n}{n-1}\int_{\p M}\left\langle u,(\frac{1}{2}\mr{tr}_{\p M}h+\frac{\mr{tr}_{\p M}k}{2}c(\nu^\flat)\sigma-\frac{1}{2}k_{a\nu}c( e^a)\sigma-\mc{A})u\right\rangle\\
   &+\int_M |\psi|^2|u|^2+\left\langle u,(c(\mathrm{d}\psi)\sigma+\psi \mr{tr}_gk )u\right\rangle+\int_{\p M}\left\langle u,\psi c(\nu^b)\sigma(u)\right\rangle.
 \end{split}
\end{align*}
Using the fact $|\wt{\mc{P}}u|\geq 0$ and $\left\langle u, c(\mathrm{d}\psi)\sigma u\right\rangle\leq |\mathrm{d}\psi||u|^2$, so 
\begin{align*}
\begin{split}
   &\int_M | \mc{B}_{\psi}u|^2\geq \int_M (|\psi|^2-|\mathrm{d}\psi|+\psi \mr{tr}_gk)|u|^2+\frac{n}{n-1}\int_M \left\langle u,\wt{\mc{R}}u\right\rangle\\
   &+\frac{n}{n-1}\int_{\p M}\left\langle u,(\frac{1}{2}\mr{tr}_{\p M}h+(\frac{\mr{tr}_{\p M}k}{2}+\frac{n-1}{n}\psi)c(\nu^\flat)\sigma-\frac{1}{2}k_{a\nu}c( e^a)\sigma-\mc{A})u\right\rangle.
 \end{split}
\end{align*}
Let $s:\p M\to \{\pm 1\}$ be a choice of signs, and we consider the following boundary condition
\begin{equation*}
  \{u\in C^\infty_c(M,S)| \chi(u|_{\p M})=u|_{\p M}\},
\end{equation*}
where $\chi:=sc(\nu^b)\sigma$ is the boundary chirality. Then $\chi^2=\mr{Id}$ and 
\begin{equation*}
  \chi\mc{A}=-\chi\mc{A}, \quad \chi(c(e^a)\sigma)=-(c(e^a)\sigma)\chi.
\end{equation*}
Hence
\begin{equation*}
  \left\langle u,\mc{A}u\right\rangle=0=\left\langle u,c(e^a)\sigma u\right\rangle.
\end{equation*}
Under this boundary condition, one has
\begin{align*}
  \int_M | \mc{B}_{\psi}u|^2 &\geq  \int_M (|\psi|^2-|\mathrm{d}\psi|+\psi \mr{tr}_gk)|u|^2+\frac{n}{n-1}\int_M \left\langle u,\wt{\mc{R}}u\right\rangle\\
  &\quad +\frac{n}{n-1}\int_{\p M}\left\langle u,(\frac{1}{2}\mr{tr}_{\p M}h+(\frac{\mr{tr}_{\p M}k}{2}+\frac{n-1}{n}\psi)c(\nu^\flat)\sigma)u\right\rangle.
\end{align*}
Since $$\left\langle u,\wt{\mc{R}}u\right\rangle\geq \frac{1}{2}(\mu-|J|)|u|^2-\|\mc{R}^E\|_{\infty}|u|^2,$$ where $\|\mc{R}^E\|_{\infty}:=\sup_{\|u\|=1} \left\{\left\langle\mathcal{R}^{E}u, u\right\rangle\right\}$, so 
\begin{align*}
&\quad \int_M | \mc{B}_{\psi}u|^2\\
&\geq  \frac{n}{n-1}  \int_M \left(\frac{n-1}{n}(|\psi|^2-|\mathrm{d}\psi|+\psi \mr{tr}_gk)+\frac{1}{2}(\mu-|J|)-\|\mc{R}^E\|_{\infty}\right)|u|^2\\
  &\quad +\frac{n}{n-1}\int_{\p M}\left\langle u,(\frac{1}{2}\mr{tr}_{\p M}h+(\frac{\mr{tr}_{\p M}k}{2}+\frac{n-1}{n}\psi)s)u\right\rangle.
\end{align*}
Now we set $\wt{\psi}=-\frac{2(n-1)}{n}\psi$ and $\theta_{\pm}=\pm \mr{tr}_{\p M} h+\mr{tr}_{\p M}k$ and $s=\pm 1$ on $\p_\pm M$, thus
\begin{align}\label{estB}
\begin{split}
&\quad \int_M | \mc{B}_{\psi}u|^2 \\
&\geq  \frac{n}{2(n-1)}  \int_M \left[\mu-|J|+\tfrac{1}{2}\left(\tfrac{n}{n-1}\wt{\psi}^2 -2\wt{\psi}\mr{tr}_gk-2|\mathrm{d}\wt{\psi}| \right)-2\|\mc{R}^E\|_{\infty}\right]|u|^2\\
  &\quad +\frac{n}{2(n-1)}\int_{\p_+ M}(\theta_+-\wt{\psi})|u|^2+\frac{n}{2(n-1)}\int_{\p_-M}(-\theta_-+\wt{\psi})|u|^2.
  \end{split}
\end{align}
\begin{remark}
For $k=0$, \eqref{estB} is reduced to \cite[Theorem 4.3 and (4.2)]{cecchini-scalar-2021-arxiv}.	
\end{remark}
Now we denote 
\begin{equation*}
  \mr{H}^1_{s}(M,S):=\{u\in \mr{H}^1_{\mr{loc}}(M,S)\cap \mr{L}^2(M,S)|\wt{D}u\in \mr{L}^2(M,S), \chi(u|_{\p M})=u|_{\p M}\}. 
\end{equation*}
Then we get
\begin{theorem}\label{thm1}
Let $(M,g,k)$ be a CMC initial data set satisfying the conditions in Theorem \ref{width estimate positive}.
For any $\epsilon>0$, if there exists a Hermitian bundle $E$ such that $\|\mc{R}^E\|_\infty<\epsilon$  and $\mr{H}^1_{s}(M,S)\cap \ker\mc{B}_\psi\neq \{0\}$, then 
\begin{equation*}
  \mr{width}(M,g)\leq t_+-t_-.
\end{equation*}
\end{theorem}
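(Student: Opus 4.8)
The plan is to run a contradiction argument in the spirit of the $\mu$-bubble and spacetime harmonic function proofs, but now using the spectral estimate \eqref{estB} for the Callias operator $\mc{B}_\psi$. Suppose $\mr{width}(M,g) > t_+ - t_-$. First I would invoke Lemma \ref{construction of h} to produce a smooth function $p$ satisfying the strict bulk inequality \eqref{most strict dec} and the strict boundary inequalities \eqref{strict expansion bound}; by compactness of $M$ there is a uniform constant $\delta > 0$ by which all three inequalities are strengthened, i.e.
\begin{equation*}
  \mu - |J| + \tfrac{1}{2}\bigl(\tfrac{n}{n-1}p^2 - 2p\,\mr{tr}_g k - 2|\nabla p|\bigr) \geq \delta, \qquad \pm(\theta_\pm - p(\p_\pm M)) \geq \delta.
\end{equation*}
Now set $\wt{\psi} := p$ (so $\psi = -\tfrac{n}{2(n-1)}p$, and one checks $2\psi + \mr{tr}_g k$ is admissible since $p$ is Lipschitz and the band has product ends where one can arrange $\psi$ constant), and choose $\epsilon := \delta/4$, say. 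The hypothesis of the theorem then furnishes a Hermitian bundle $E$ with $\|\mc{R}^E\|_\infty < \epsilon$ and a nonzero $u \in \mr{H}^1_s(M,S) \cap \ker\mc{B}_\psi$.

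The second step is to feed this $u$ into \eqref{estB}. The left-hand side $\int_M |\mc{B}_\psi u|^2$ vanishes because $u \in \ker\mc{B}_\psi$. On the right-hand side, the bulk integrand is
\begin{equation*}
  \mu - |J| + \tfrac{1}{2}\bigl(\tfrac{n}{n-1}\wt{\psi}^2 - 2\wt{\psi}\,\mr{tr}_g k - 2|\mathrm{d}\wt{\psi}|\bigr) - 2\|\mc{R}^E\|_\infty \geq \delta - 2\epsilon = \tfrac{\delta}{2} > 0,
\end{equation*}
and the two boundary integrands are $\theta_+ - \wt{\psi} \geq \delta > 0$ on $\p_+M$ and $-\theta_- + \wt{\psi} \geq \delta > 0$ on $\p_-M$. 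Hence \eqref{estB} gives $0 = \int_M |\mc{B}_\psi u|^2 \geq \tfrac{n}{2(n-1)}\bigl(\tfrac{\delta}{2}\int_M |u|^2 + \delta\int_{\p M}|u|^2\bigr) \geq \tfrac{n\delta}{4(n-1)}\int_M |u|^2$, forcing $u \equiv 0$ on $M$, which contradicts $u \neq 0$. Therefore $\mr{width}(M,g) \leq t_+ - t_-$.

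The main technical point to be careful about is not the inequality chain, which is immediate once \eqref{estB} is in hand, but rather the \emph{admissibility} bookkeeping: one must ensure that $2\psi + \mr{tr}_g k = -\tfrac{n}{n-1}p + \lambda$ qualifies as an admissible potential in the sense of the relevant definition, so that $\mc{B}_\psi$ is a genuine Callias operator and the integration-by-parts leading to \eqref{estB} (which was derived for $u \in C^\infty_c$) extends to elements of $\mr{H}^1_s(M,S)$ by the usual density/elliptic-regularity argument. Since $p = \eta\circ\phi$ with $\phi$ locally constant near $\p_\pm M$ (or can be modified to be so without disturbing the strict inequalities), $\psi$ is constant on a neighborhood of each boundary component, and the support set $L$ can be taken to be a compact collar complement; the regularization of \eqref{estB} to $\mr{H}^1_s$ is standard for Callias operators and is exactly the setting of \cite{cecchini-scalar-2021-arxiv}. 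One should also record that the choice of signs $s = \pm 1$ on $\p_\pm M$ used in deriving \eqref{estB} is precisely the one built into the definition of $\mr{H}^1_s(M,S)$, so no additional compatibility needs checking.
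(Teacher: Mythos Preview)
Your proof is correct and follows essentially the same contradiction argument as the paper: assume $\mr{width}(M,g)>t_+-t_-$, invoke Lemma~\ref{construction of h} to produce $\wt{\psi}=p$ with strict bulk and boundary inequalities, choose $\epsilon$ small enough (using compactness) so that the hypothesis supplies a bundle $E$ and a nonzero $u\in\mr{H}^1_s(M,S)\cap\ker\mc{B}_\psi$, then feed $u$ into \eqref{estB} to reach $0>0$. Your treatment is in fact more explicit than the paper's about the $\epsilon$--$\delta$ bookkeeping and about the admissibility of $2\psi+\mr{tr}_gk$ (which the paper relegates to a remark after the proof).
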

\begin{proof}
By Lemma \ref{construction of h}, if $  \mr{width}(M,g)> t_+-t_-$, we can find $\wt{\psi}$ such that 
	\begin{equation*}
  \mu-|J|+\tfrac{1}{2}\left(\frac{n}{n-1}\wt{\psi}^2-2\wt{\psi}\mr{tr}_gk -2|\mathrm{d}\wt{\psi}|\right)>\epsilon>2\|\mc{R}^E\|_\infty
\end{equation*}
and 
\begin{equation*}
  \theta_--\wt{\psi}<0 \quad \text{on} \quad \p_-M 
\end{equation*}
\begin{equation*}
  \theta_+-\wt{\psi}>0 \quad \text{on} \quad \p_+M 
\end{equation*}
which follows that
\begin{equation*}
 0=\int_M | \mc{B}_{\psi}u|^2>0,
\end{equation*}
contradiction. The proof is complete.
\end{proof}

\begin{remark}
We can choose a suitable $\wt{\psi}$ such that Lemma \ref{construction of h} holds and $-\frac{n}{n-1}\wt{\psi}+\lambda$ is an admissible potential. Note that
\begin{align*}
\begin{split}
 2 \psi+\mr{tr}_g(k)&=-\frac{n}{n-1}\wt{\psi}+\lambda=-\frac{n}{n-1}\eta\circ\phi+\lambda,
  \end{split}
\end{align*}
where $\phi:(M,g)\to [t_--\epsilon,t_++\epsilon]$ and $\mr{width}(M,g)>t_+-t_-+2\epsilon$, $\phi^{-1}(t_--\epsilon)=\p_-M$ and $\phi^{-1}(t_-+\epsilon)=\p_+M$, see Lemma \ref{construction of h} and $\eta'<0$. The Lipschitz function $\phi$ can be chosen such that $\phi$ is constant near $\p_\pm M$.
\begin{figure}[ht]
\centering
\includegraphics[width=0.5\textwidth]{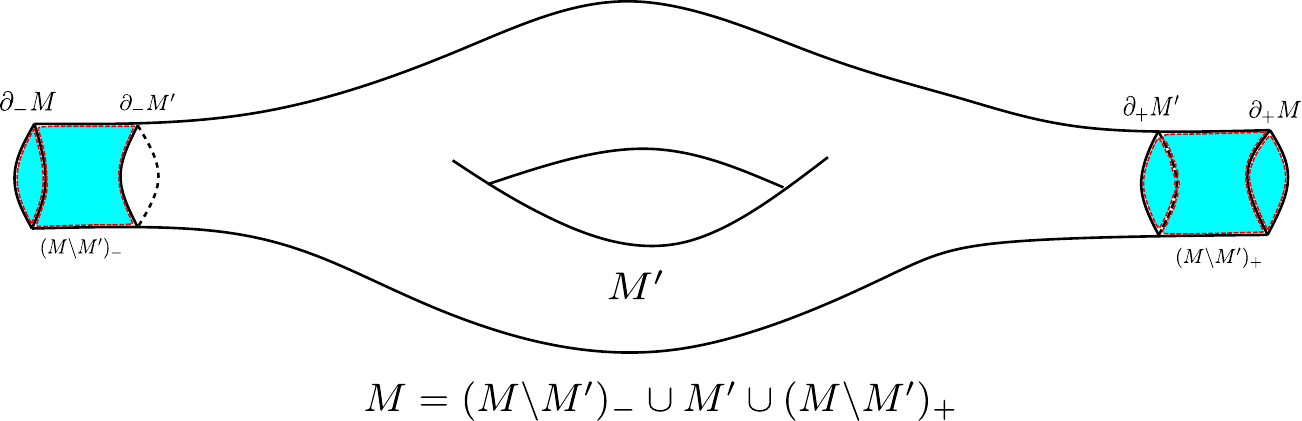}
\caption{The construction of $(M',g')$}
\end{figure}
 In fact, let $(M',g')$ be a Riemannian manifold by cutting a small collar neighborhood of $\p_\pm M$ from $(M,g)$, such that
\begin{equation*}
  \mr{width}(M',g')>t_+-t_-+2\epsilon.
\end{equation*}
Then there exists a smooth function $\phi':(M',g)\to [t_--\epsilon,t_++\epsilon]$ with $\phi'^{-1}(t_--\epsilon)=\p_-M'$ and $\phi'^{-1}(t_-+\epsilon)=\p_+M'$. Now we set
\begin{equation*}
  \phi(p)= \begin{cases}
 t_--\epsilon	& p\in(M\backslash M')_-\\
 	\phi'&p\in M'\\
 	t_++\epsilon& p\in (M\backslash M')_+.
 \end{cases}
\end{equation*}
Then $\phi:(M,g)\to [t_--\epsilon,t_++\epsilon]$ is a Lipschitz function such that  $\p_-M\subset \phi^{-1}(t_--\epsilon)$ and $\p_+M\subset\phi^{-1}(t_-+\epsilon)$, which is constant near $\p_{\pm}M$. By taking $(M\backslash M')_{\pm}$ small enough, so $\phi$ satisfies Lemma \ref{construction of h}.

Therefore, $2 \psi+\mr{tr}_g(k)$ is a nonzero constant for some small $\epsilon>0$.   Hence  $\mc{B}_\psi=\wt{D}+\psi \sigma$ is  a Callias operator.  
\end{remark}

\begin{remark}
Inspired by the rigidity theorem \cite[Theorem 8.3]{cecchini-scalar-2021-arxiv}, we assume that  $(M,g,k)$ satisfies the following conditions:
\begin{itemize}
  \item[(i)]  $\mu-|J|-2\|\mc{R}^E\|_\infty\geq \frac{1}{2}\sigma$;
  \item[(ii)] $x:M\to [t_-,t_+]$ is a smooth width function for some $t_-,t_+\in \mb{R}$;
  \item[(iii)] $\theta_+\geq\eta(t_+)$ and $\theta_-\leq \eta(t_-)$;
  \item[(iv)] $\ker\mc{B}_{\psi,s}\neq\{0\}$. 
\end{itemize}
For any  $u\in \ker\mc{B}_{\psi,s}$ with $u\neq 0$, we have
\begin{align*}
  0&=\int_M|\mc{B}_{\psi}u|^2\\
  &\geq \frac{n}{n-1}\int_M |\wt{\mc{P}}u|^2\\
  &\quad+\frac{n}{2(n-1)}\int_M (-\wt{\psi}')\left\langle u,(c(\mathrm{d}x)\sigma+1)u\right\rangle\\
  &\quad+\frac{n}{2(n-1)}  \int_M \left(\mu-|J|-2\|\mc{R}^E\|_{\infty}-\frac{1}{2}\sigma\right)|u|^2\\
   &\quad+\frac{n}{2(n-1)}  \int_M \left(\frac{1}{2}\sigma+\frac{n}{2(n-1)}\wt{\psi}^2+\wt{\psi}'-\wt{\psi}\mr{tr}_gk\right)|u|^2\\
  &\quad +\frac{n}{2(n-1)}\int_{\p_+ M}(\theta_+-\wt{\psi})|u|^2+\frac{n}{2(n-1)}\int_{\p_-M}(-\theta_-+\wt{\psi})|u|^2\\
  &\geq 0
\end{align*}
by the above assumptions. We obtain
\begin{equation*}
  \wt{\mc{P}}_\xi u=\wt{\n}_\xi u+\frac{1}{n}c(\xi^\flat) \wt{D}u=0
\end{equation*}
on $M$ and 
\begin{equation*}
  c(\mathrm{d}x)\sigma u=-u
\end{equation*}
almost everywhere. Since $\mc{B}_\psi u=0$, so  
\begin{equation*}
    (\wt{\n}_\xi -\frac{\psi}{n}c(\xi^\flat)\sigma)u=0.
\end{equation*}
Note that the boundary elliptic regularity implies that $u$ is smooth. 
If $u$ vanishes at some point $p_0\in M$, for any point $p_1\in M$, let $\gamma=\gamma(t)$ be a smooth path in $M$ connected $p_0$ and $p_1$, then 
\begin{equation*}
    (\wt{\n}_{\gamma'(t)} -\frac{\psi}{n}c(\gamma'(t)^\flat)\sigma)u=0.
\end{equation*}
which is a linear ordinary differential equation, and has the unique solution $u\equiv 0$ along $\gamma(t)$. Thus $u\equiv 0$ on $M$ since $M$ is connected, which contradicts to $u\neq 0$. Hence $|u|>0$. The argument to deduce $|u|>0$ also can be found in the proof of \cite[Lemma 2.6]{cecchini-Zeidler-2021}. Thus
 $$\mu-|J|-2\|\mc{R}^E\|_\infty= \frac{1}{2}\sigma,\quad \theta_\pm=\eta(t_\pm).$$
 
\end{remark}

\subsection{Band width estimates}
 
 In this subsection, we give some examples of bands satisfying the assumptions of Theorem \ref{thm1}. 
 
 \subsubsection{Infinite vertical $\widehat{\mr{A}}$-area}
 
 The following definition of {\it infinite vertical $\widehat{\mr{A}}$-area} can be found in \cite[Definition 7.3]{cecchini-scalar-2021-arxiv}. 
 
 \begin{definition}[Infinite vertical $\widehat{\mr{A}}$-area]
 A band $M$ is said to have {\it infinite vertical $\widehat{\mr{A}}$-area}, if for every $\varepsilon>0$, there exists a Hermitian vector bundle $E\to M$ such that $\|R^E\|_{\infty}<\varepsilon$ and such that we have
 \begin{equation*}
  \int_{\p_-M}\widehat{\mathbf{A}}(\p_-M)\wedge \mr{ch}(E|_{\p_-M})\neq 0.
\end{equation*}
 \end{definition}
\begin{remark}
If $X$ is a closed spin manifold (even dimensional) with infinite $\widehat{A}$-area, then $M=X\times [-1,1]$ has infinite vertical $\widehat{\mr{A}}$-area. An important class of examples consists of even-dimensional compactly enlargeable manifolds, for example, $X=\mb{T}^{2m}$. On the other hand,  if $M$ is a $\widehat{\mr{A}}$-overtorical band, then $M$ also has infinite vertical $\widehat{\mr{A}}$-area.
\end{remark}
From Theorem \ref{thm1}, we obtain
\begin{theorem}\label{thminfA}
Let $(M,g,k)$ be a CMC initial data set satisfying the conditions in Theorem  \ref{width estimate positive}.
If $(M,g)$ is a spin band of infinite vertical $\widehat{\mr{A}}$-area, then 	
\begin{equation*}
  \mr{width}(M,g)\leq t_+-t_-.
\end{equation*}
\begin{proof}
From \cite[Corollary 3.10]{cecchini-scalar-2021-arxiv}, one has
\begin{equation*}
  \mr{ind}(\mc{B}_{\psi,s})=\mr{ind}(\s{D}_{\p_-M,E|_{\p_-M}})= \int_{\p_-M}\widehat{\mathbf{A}}(\p_-M)\wedge \mr{ch}(E|_{\p_-M})\neq 0,
\end{equation*}
	which follows that
	\begin{equation*}
   \ker\mc{B}_{\psi,s}=\mr{H}^1_{s}(M,S)\cap \ker\mc{B}_\psi\neq \{0\}.
\end{equation*}
\end{proof}
\end{theorem}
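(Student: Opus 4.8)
The plan is to obtain Theorem~\ref{thminfA} as a direct consequence of Theorem~\ref{thm1}: it suffices to exhibit, for some $\epsilon>0$, a Hermitian bundle $E\to M$ with $\|\mc{R}^E\|_\infty<\epsilon$ for which the Callias operator $\mc{B}_{\psi,s}=\wt D+\psi\sigma$ — taken with the boundary chirality $\chi=sc(\nu^\flat)\sigma$, $s=\pm1$ on $\p_\pm M$ — satisfies $\mr{H}^1_{s}(M,S)\cap\ker\mc{B}_\psi\neq\{0\}$. The bundle $E$ will be supplied by the infinite vertical $\widehat{\mr{A}}$-area hypothesis, and the nontriviality of the kernel will come from the Callias index theorem of \cite{cecchini-scalar-2021-arxiv}.

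First I would fix the potential and the energy datum by arguing by contradiction: suppose $\mr{width}(M,g)>t_+-t_-$, and fix $\varepsilon>0$ with $\mr{width}(M,g)>t_+-t_-+2\varepsilon$. Running Lemma~\ref{construction of h} together with the collar modification of the remark preceding Theorem~\ref{thm1} — so that the Lipschitz function $\phi\colon M\to[t_--\varepsilon,t_++\varepsilon]$ is constant near $\p_\pm M$ and the function $-\tfrac{n}{n-1}\eta\circ\phi+\lambda=2\psi+\mr{tr}_gk$ is a genuine admissible potential (locally constant and nonzero near each boundary component, which one arranges by choosing $\varepsilon$ generically, since $t\mapsto-\tfrac{n}{n-1}\eta(t)+\lambda$ is strictly monotonic) — produces $\wt\psi=\eta\circ\phi$ with
\[
\mu-|J|+\tfrac12\Bigl(\tfrac{n}{n-1}\wt\psi^2-2\wt\psi\,\mr{tr}_gk-2|\mathrm{d}\wt\psi|\Bigr)\;\ge\;\delta
\]
for a fixed $\delta>0$ (compactness of $M$ promotes the strict inequality \eqref{most strict dec} to this uniform lower bound), together with $\theta_--\wt\psi<0$ on $\p_-M$ and $\theta_+-\wt\psi>0$ on $\p_+M$. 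Now set $\epsilon:=\delta/2$.

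Next, since $(M,g)$ has infinite vertical $\widehat{\mr{A}}$-area, there is a Hermitian bundle $E\to M$ with $\|R^E\|_\infty$ small enough that $\|\mc{R}^E\|_\infty<\epsilon$ — recall $\mc{R}^E=\sum_{i<j}c(e^i)c(e^j)(\mr{id}_{\s{S}_M}\otimes R^{\n^E})$, so $\|\mc{R}^E\|_\infty$ is controlled by a dimensional multiple of $\|R^E\|_\infty$ — and with $\int_{\p_-M}\widehat{\mathbf{A}}(\p_-M)\wedge\mr{ch}(E|_{\p_-M})\neq0$. Forming the relative Dirac bundle $S=(\s{S}_M\otimes E)\oplus(\s{S}_M\otimes E)$ of Example~\ref{ex1} and the associated Callias operator $\mc{B}_\psi$ (with the $\psi$ just constructed), the index theorem \cite[Corollary~3.10]{cecchini-scalar-2021-arxiv} identifies $\mr{ind}(\mc{B}_{\psi,s})$ with the index of the Dirac operator on the closed even-dimensional manifold $\p_-M$ twisted by $E|_{\p_-M}$, which by Atiyah--Singer equals $\int_{\p_-M}\widehat{\mathbf{A}}(\p_-M)\wedge\mr{ch}(E|_{\p_-M})\neq0$. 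Hence $\ker\mc{B}_{\psi,s}=\mr{H}^1_{s}(M,S)\cap\ker\mc{B}_\psi\neq\{0\}$; then, for any nonzero $u$ in this kernel, the estimate \eqref{estB} together with the three inequalities of the previous paragraph forces $0=\int_M|\mc{B}_\psi u|^2>0$, a contradiction. Equivalently, the hypotheses of Theorem~\ref{thm1} are met, so $\mr{width}(M,g)\le t_+-t_-$.

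The step I expect to be the main obstacle is the one in the second paragraph: turning $-\tfrac{n}{n-1}\eta\circ\phi+\lambda$ into a legitimate admissible potential — locally constant and nonzero near $\p_\pm M$ — while preserving both \eqref{most strict dec} and the strict boundary bounds \eqref{strict expansion bound}. This is handled by the collar-truncation construction in the remark before Theorem~\ref{thm1}: remove a sufficiently thin neighborhood of $\p_\pm M$, apply Zhu's lemma on the smaller band $M'$ (still of width $>t_+-t_-+2\varepsilon$), extend $\phi$ by the constants $t_{\pm}\pm\varepsilon$ near $\p_\pm M$, and choose $\varepsilon$ so that the resulting boundary values of $-\tfrac{n}{n-1}\eta\circ\phi+\lambda$ are nonzero. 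Granting this, the spectral identity \eqref{estB}, the index computation and the contradiction are all routine in light of the results already established.
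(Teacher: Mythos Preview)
Your proposal is correct and follows essentially the same approach as the paper: use the infinite vertical $\widehat{\mr{A}}$-area hypothesis to produce a bundle $E$ with small curvature and nonvanishing boundary index, invoke \cite[Corollary~3.10]{cecchini-scalar-2021-arxiv} to conclude $\ker\mc{B}_{\psi,s}\neq\{0\}$, and then apply Theorem~\ref{thm1}. The paper's own proof is terser---it records only the index computation and the resulting nontrivial kernel, leaving the choice of $\epsilon$, the admissibility of the potential, and the invocation of Theorem~\ref{thm1} implicit---whereas you have spelled these steps out explicitly.
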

\begin{remark}
	For $k=0$ and $\sigma=n(n-1)$, 
	 Theorem \ref{thminfA} is exactly \cite[Theorem 7.6]{cecchini-scalar-2021-arxiv}.
\end{remark}

 \subsubsection{$\mc{K}\mc{O}$-bands}
 The following definition of $\mc{K}\mc{O}$ band can be found in {\cite[Page 9]{zeidler-width-2020}}.
 \begin{definition}[$\mc{K}\mc{O}$-band]
 	A band $M\in \mc{K}\mc{O}$ is called a $\mc{K}\mc{O}$ band if $M$ is spin and admits a flat bundle $E\to M$ of finitely generated projective Hilbert-A-modules for some unital Real $C^*$-algebra $A$ such that the twisted Dirac operator on $\p_{\pm}M$ has non-vanishing index $\mr{ind}(\s{D}_{\p_-M,E|_{\p_-M}})\neq 0\in \mr{KO}_{n-1}(A)$.
 \end{definition}
 \begin{remark}
In particular, all overtorical bands and $\widehat{\mr{A}}$-overtorical bands are $\mc{K}\mc{O}$ bands, see \cite[Proposition 5.2]{zeidler-width-2020}. 
If $M=X\times [-1,1]$ where $X$ is a closed spin manifold with nonvanishing Rosenberg index $\alpha(X)$, then $M$ is a $\mc{K}\mc{O}$-band, see \cite[Section 3]{zeidler-band-2020}.
\end{remark}

 Let $(M, g)$ be an $n$-dimensional Riemannian spin manifold. Let $A$ be a Real unital $C^{*}$-algebra and let $\left(E, \nabla^{E}\right)$ be a bundle of finitely generated projective Hilbert $A$-modules endowed with a metric connection. Then one can also define the {\it generalized Callias-type operator} $\mc{B}_{\psi,s}$, see \cite[Section 6]{cecchini} or \cite{cecchini-Zeidler}. Similarly, we can obtain the estimate \eqref{estB}, and Theorem \ref{thm1} holds for this flat bundle $E$. 
\begin{theorem}\label{thmKO}
Let $(M,g,k)$ be a CMC initial data set satisfying the conditions in Theorem \ref{width estimate positive}.
If $M$ is a $\mc{K}\mc{O}$-band, then 
\begin{equation*}
  \mr{width}(M,g)\leq t_+-t_-.
\end{equation*}
\end{theorem}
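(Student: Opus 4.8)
The plan is to run the argument of Theorem~\ref{thminfA} verbatim, with the $\widehat{\mr{A}}$-area Hermitian bundle replaced by the flat bundle $E\to M$ of finitely generated projective Hilbert-$A$-modules furnished by the definition of a $\mc{K}\mc{O}$-band, and with the Atiyah--Singer index replaced by the $\mr{KO}_{n-1}(A)$-valued index of the associated generalized Callias-type operator.

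First I would set up, for this $E$, the same objects used in Example~\ref{ex1} and in the derivation of \eqref{estB}: the bundle $S=(\s{S}_M\otimes E)\oplus(\s{S}_M\otimes E)$, the connection $\wt{\n}$, the generalized Callias-type operator $\mc{B}_{\psi,s}=\wt{D}+\psi\sigma$, and the boundary chirality $\chi=s\,c(\nu^\flat)\sigma$. As indicated in the remark preceding the statement, all the identities used to derive \eqref{estB} — the two Green's formulas, the Friedrich/Penrose inequality for $\wt{\mc{P}}$, and the boundary chirality decomposition killing the $\mc{A}$- and $c(e^a)\sigma$-terms — go through unchanged because $E$ enters only $A$-linearly, so the spectral estimate \eqref{estB} holds and Theorem~\ref{thm1} is valid for this $E$. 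Since $E$ is flat, $R^{\n^E}=0$, hence $\mc{R}^E=0$ and $\|\mc{R}^E\|_\infty=0<\epsilon$ for every $\epsilon>0$; thus the curvature hypothesis of Theorem~\ref{thm1} is automatic, and it only remains to exhibit a nonzero element of $\mr{H}^1_s(M,S)\cap\ker\mc{B}_\psi$, for the admissible potential $2\psi+\mr{tr}_gk=-\tfrac{n}{n-1}\eta\circ\phi+\lambda$ produced by Lemma~\ref{construction of h}.

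Second I would produce that kernel element from the Callias-type index theorem with Hilbert-module coefficients, the analogue of \cite[Corollary~3.10]{cecchini-scalar-2021-arxiv} available from \cite{cecchini} (or \cite{cecchini-Zeidler}), which gives
\[
\mr{ind}(\mc{B}_{\psi,s})=\mr{ind}(\s{D}_{\p_-M,E|_{\p_-M}})\in\mr{KO}_{n-1}(A).
\]
By the definition of a $\mc{K}\mc{O}$-band this class is nonzero. Since $\mc{B}_{\psi,s}$ is odd and self-adjoint, $\mr{ind}(\mc{B}_{\psi,s})=[\ker\mc{B}_{\psi,s}^+]-[\ker\mc{B}_{\psi,s}^-]$, so a nonvanishing index forces $\ker\mc{B}_{\psi,s}=\mr{H}^1_s(M,S)\cap\ker\mc{B}_\psi\neq\{0\}$, exactly as in the proof of Theorem~\ref{thminfA}. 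With a nonzero $u$ in this kernel and $\|\mc{R}^E\|_\infty=0$, Theorem~\ref{thm1} gives $\mr{width}(M,g)\leq t_+-t_-$; unwound, this is the usual contradiction, since if $\mr{width}(M,g)>t_+-t_-$ then Lemma~\ref{construction of h} yields $\wt{\psi}$ making the bracketed curvature integrand and both boundary integrands in \eqref{estB} strictly positive, whence $0=\int_M|\mc{B}_\psi u|^2>0$.

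The part I expect to be the main obstacle is not the band geometry — that is identical to the $\widehat{\mr{A}}$-area case — but certifying the functional-analytic inputs over a general unital Real $C^*$-algebra $A$: that the generalized Callias-type operator on finitely generated projective Hilbert-$A$-module coefficients is regular and Fredholm in the Mishchenko--Fomenko sense, that \eqref{estB} and the elliptic-regularity and vanishing arguments behind Theorem~\ref{thm1} survive for such coefficients, and that a nonzero class in $\mr{KO}_{n-1}(A)$ genuinely obstructs invertibility of $\mc{B}_{\psi,s}$ and hence yields a nonzero kernel. These facts are precisely what is established in \cite{zeidler-width-2020, cecchini, cecchini-Zeidler}, so the proof reduces to checking that their hypotheses hold in the present CMC setting.
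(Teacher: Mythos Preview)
Your proposal is correct and follows essentially the same approach as the paper: take the flat bundle $E$ from the definition of a $\mc{K}\mc{O}$-band so that $\|\mc{R}^E\|_\infty=0$, invoke the Callias-type index formula $\mr{ind}(\mc{B}_{\psi,s})=\mr{ind}(\s{D}_{\p_-M,E|_{\p_-M}})\neq 0$ to obtain a nonzero kernel element, and then apply Theorem~\ref{thm1}. The paper's proof is just a terse two-line version of exactly this, relying (as you anticipated) on \cite{cecchini, cecchini-Zeidler} for the Hilbert-$A$-module functional analysis.
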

\begin{proof}
	If $M$ is a $\mc{K}\mc{O}$-band, then there exists a flat bundle $E\to M$ such that $\mr{ind}(\s{D}_{\p_-M,E|_{\p_-M}})\neq 0$. Hence 
	\begin{equation*}
    \mr{ind}(\mc{B}_{\psi,s})=\mr{ind}(\s{D}_{\p_-M,E|_{\p_-M}})\neq 0.
\end{equation*}
By Theorem \ref{thm1}, the proof is complete.
\end{proof}
In particular, we obtain a proof of Theorem \ref{width estimate positive} via the Dirac operator for bands $\mathbb{T}^{n-1}\times [-1,1]$ of all dimensions.

\section{Appendix}

In this section, we will discuss the solutions $\eta=\eta(t)$ of the ordinary differential equation
\begin{equation}
  \sigma + \tfrac{n}{n - 1} \eta^2 - 2 \eta \lambda + 2 \eta' = 0 ,\quad \eta'(t)<0.\label{ode}
\end{equation}
Let $p (x) = \tfrac{n}{n - 1} x^2 - 2 \lambda x + \sigma$, the solution to
\eqref{ode} depends on the sign of $\sigma - \tfrac{n - 1}{n} \lambda^2$. In
what follows, $c$ is a constant. We divide into three cases:

{{\bfseries{Case 1:}}} when $\sigma = \tfrac{n - 1}{n} \lambda^2$, $p$
has only one root. The ordinary differential equation \eqref{ode} reduces to
\[ \tfrac{n}{n - 1} (\eta - \tfrac{n - 1}{n} \lambda)^2 + 2 \eta' = 0. \]
The solution of \eqref{ode} is
\begin{equation}
    \eta (t) = \tfrac{n - 1}{n} \lambda + \tfrac{2 (n - 1)}{n (t - c)}\label{eta equal} 
\end{equation} 
on $\{t > c\}$ or $\{t < c\}$.

{{\bfseries{Case 2:}}} when $\sigma > \tfrac{n - 1}{n} \lambda^2$, it is
easy to see that
\[ p (x) = \tfrac{n}{n - 1} (x - \tfrac{n - 1}{n} \lambda)^2 + (\sigma -
   \tfrac{n - 1}{n} \lambda^2) \]
has no real root. We solve that
\begin{equation} \eta (t) = \tfrac{n - 1}{n} \lambda - \sqrt{\tfrac{n - 1}{n} \left( \sigma
   - \tfrac{n - 1}{n} \lambda^2 \right)} \tan \left( \tfrac{n}{2 (n - 1)}
   \sqrt{\tfrac{n - 1}{n} (\sigma - \tfrac{n - 1}{n} \lambda^2)} (t - c)
   \right) \label{eta greater}
\end{equation}   
with $t$ satisfying
\[ \left| \tfrac{n}{2 (n - 1)} \sqrt{\tfrac{n - 1}{n} (\sigma - \tfrac{n -
   1}{n} \lambda^2)} (t - c) \right| < \tfrac{\pi}{2} . \]

 {{\bfseries{Case 3:}}} when $\sigma < \tfrac{n - 1}{n} \lambda^2$, $p$
has two real roots $x_{\pm}$. Denote
$$a = \sqrt{\tfrac{n - 1}{n} (\tfrac{n -
1}{n} \lambda^2 - \sigma)}.$$ Then
\[ x_{\pm} = \tfrac{n - 1}{n} \lambda \pm a. \]
So \eqref{ode} turns into
\[ \tfrac{n}{n - 1} (\eta - \tfrac{n - 1}{n} \lambda + a) (\eta - \tfrac{n -
   1}{n} \lambda - a) = - 2 \eta' = - 2 (\eta - \tfrac{n - 1}{n} \lambda)' . \]
We get the solution
\[ \eta (t) = \tfrac{n - 1}{n} \lambda + a \coth (\tfrac{n}{2 (n - 1)} a (t -
   c)) \]
which is
\begin{equation} \eta (t) = \tfrac{n - 1}{n} \lambda + \sqrt{\tfrac{n - 1}{n} (\tfrac{n -
   1}{n} \lambda^2 - \sigma)} \coth \left( \tfrac{n}{2 (n - 1)} 
   \sqrt{\tfrac{n - 1}{n} (\tfrac{n - 1}{n} \lambda^2 - \sigma)} (t - c)
   \right) \label{eta less}
\end{equation}   
with $t > c$ or $t < c$.

\bibliographystyle{alpha}
\bibliography{band-width-estimates}

\newcommand{\etalchar}[1]{$^{#1}$}
\begin{thebibliography}{BHK{\etalchar{+}}21}

\bibitem[AEM10]{andersson-jangs-2010}
Lars Andersson, Michael Eichmair, and Jan Metzger.
\newblock Jang's equation and its applications to marginally trapped surfaces.
\newblock {\em arXiv:1006.4601 [gr-qc]}, 2010.

\bibitem[AM09]{andersson-area-2009}
Lars Andersson and Jan Metzger.
\newblock The area of horizons and the trapped region.
\newblock {\em Comm. Math. Phys.}, 290(3):941--972, 2009.

\bibitem[AMS05]{andersson-local-2005}
Lars Andersson, Marc Mars, and Walter Simon.
\newblock Local existence of dynamical and trapping horizons.
\newblock {\em Phys. Rev. Lett.}, 95:111102, Sep 2005.

\bibitem[BHK{\etalchar{+}}21]{bray-spacetime-2021}
Hubert Bray, Sven Hirsch, Demetre Kazaras, Marcus Khuri, and Yiyue Zhang.
\newblock Spacetime {Harmonic} {Functions} and {Applications} to {Mass}.
\newblock {\em arXiv:2102.11421 [gr-qc]}, February 2021.

\bibitem[BKKS22]{bray-harmonic-2022}
Hubert~L. Bray, Demetre~P. Kazaras, Marcus~A. Khuri, and Daniel~L. Stern.
\newblock Harmonic functions and the mass of 3-dimensional asymptotically flat
  {R}iemannian manifolds.
\newblock {\em J. Geom. Anal.}, 32(6):Paper No. 184, 29, 2022.

\bibitem[Cec20]{cecchini}
Simone Cecchini.
\newblock {A long neck principle for Riemannian spin manifolds with positive
  scalar curvature}.
\newblock {\em Geometric and Functional Analysis}, 30(5):1183--1223, 2020.

\bibitem[CL20]{chodosh-generalized-2020-arxiv}
Otis Chodosh and Chao Li.
\newblock Generalized soap bubbles and the topology of manifolds with positive
  scalar curvature.
\newblock {\em arXiv:2008.11888 [math]}, 2020.

\bibitem[CZ21a]{cecchini-Zeidler-2021}
Simone Cecchini and Rudolf Zeidler.
\newblock The positive mass theorem and distance estimates in the spin setting.
\newblock {\em arXiv:2108.1197v2 [math].}, 2021.

\bibitem[CZ21b]{cecchini-scalar-2021-arxiv}
Simone Cecchini and Rudolf Zeidler.
\newblock Scalar and mean curvature comparison via the dirac operator.
\newblock {\em arXiv:2103.06833 [math]. To appear in Geometry \& Topology},
  2021.

\bibitem[CZ21c]{cecchini-Zeidler}
Simone Cecchini and Rudolf Zeidler.
\newblock {Scalar curvature and generalized Callias operators}.
\newblock In {\em Perspectives in Scalar Curvature}. World Scientiﬁc, 2021.

\bibitem[EGM21]{eichmair-initial-2021}
Michael Eichmair, Gregory~J. Galloway, and Abraão Mendes.
\newblock Initial {Data} {Rigidity} {Results}.
\newblock {\em Communications in Mathematical Physics}, 2021.

\bibitem[EHLS16]{eichmair-spacetime-2016}
Michael Eichmair, Lan-Hsuan Huang, Dan~A. Lee, and Richard Schoen.
\newblock The spacetime positive mass theorem in dimensions less than eight.
\newblock {\em J. Eur. Math. Soc. (JEMS)}, 18(1):83--121, 2016.

\bibitem[Eic09]{eichmair-plateau-2009}
Michael Eichmair.
\newblock The {Plateau} problem for marginally outer trapped surfaces.
\newblock {\em Journal of Differential Geometry}, 83(3):551--584, 2009.

\bibitem[Gro18]{gromov-metric-2018}
Misha Gromov.
\newblock Metric inequalities with scalar curvature.
\newblock {\em Geom. Funct. Anal.}, 28(3):645--726, 2018.

\bibitem[Gro19]{gromov-scalar-2019-arxiv}
Misha Gromov.
\newblock Scalar {Curvature} of {Manifolds} with {Boundaries}: {Natural}
  {Questions} and {Artificial} {Constructions}.
\newblock {\em arXiv:1811.04311 [math]}, 2019.

\bibitem[Gro21]{gromov-four-2021}
Misha Gromov.
\newblock Four {Lectures} on {Scalar} {Curvature}.
\newblock {\em arXiv:1908.10612 [math]}, 2021.

\bibitem[GS06]{galloway-generalization-2006}
Gregory~J. Galloway and Richard Schoen.
\newblock A generalization of {H}awking's black hole topology theorem to higher
  dimensions.
\newblock {\em Comm. Math. Phys.}, 266(2):571--576, 2006.

\bibitem[HKK20]{hirsch-spacetime-2020}
Sven Hirsch, Demetre Kazaras, and Marcus Khuri.
\newblock Spacetime {Harmonic} {Functions} and the {Mass} of 3-{Dimensional}
  {Asymptotically} {Flat} {Initial} {Data} for the {Einstein} {Equations}.
\newblock {\em {J}. of {D}ifferential {G}eom, to appear}, 2020.

\bibitem[HZ03]{hijazi-dirac-witten-2003}
Oussama Hijazi and Xiao Zhang.
\newblock The {D}irac-{W}itten operator on spacelike hypersurfaces.
\newblock {\em Comm. Anal. Geom.}, 11(4):737--750, 2003.

\bibitem[LLU22]{lee-density-2022-arxiv}
Dan~A. Lee, Martin Lesourd, and Ryan Unger.
\newblock Density and positive mass theorems for incomplete manifolds.
\newblock 2022.

\bibitem[LM89]{lawson-spin-1989}
H.~Blaine Lawson, Jr. and Marie-Louise Michelsohn.
\newblock {\em Spin geometry}, volume~38 of {\em Princeton Mathematical
  Series}.
\newblock Princeton University Press, Princeton, NJ, 1989.

\bibitem[LUY21]{lesourd-positive-2021}
Martin Lesourd, Ryan Unger, and Shing-Tung Yau.
\newblock The positive mass theorem with arbitrary ends (to appear in {J}.
  {D}ifferential {G}eom.).
\newblock {\em arXiv:2103.02744 [gr-qc]}, 2021.

\bibitem[Rä21]{rade-scalar-2021}
Daniel Räde.
\newblock Scalar and mean curvature comparison via $mu$-bubbles.
\newblock {\em arXiv:2104.10120 [math]}, May 2021.

\bibitem[Ste19]{stern-scalar-2019}
Daniel Stern.
\newblock Scalar curvature and harmonic maps to ${S}^1$ (to appear in {J}.
  {D}ifferential {G}eom.).
\newblock {\em arXiv:1908.09754 [math]}, 2019.

\bibitem[SY79a]{schoen-existence-1979}
R.~Schoen and Shing~Tung Yau.
\newblock Existence of incompressible minimal surfaces and the topology of
  three-dimensional manifolds with nonnegative scalar curvature.
\newblock {\em Ann. of Math. (2)}, 110(1):127--142, 1979.

\bibitem[SY79b]{schoen-proof-1979}
Richard Schoen and Shing~Tung Yau.
\newblock On the proof of the positive mass conjecture in general relativity.
\newblock {\em Comm. Math. Phys.}, 65(1):45--76, 1979.

\bibitem[SY81]{schoen-proof-1981}
Richard Schoen and Shing~Tung Yau.
\newblock Proof of the positive mass theorem. ii.
\newblock {\em Communications in Mathematical Physics}, 79(2):231--260, 1981.

\bibitem[Tsa21]{tsang-dihedral-2021-arxiv}
Tin-Yau Tsang.
\newblock Dihedral rigidity for cubic initial data sets.
\newblock {\em arXiv:2108.08942 [math]}, 2021.

\bibitem[Zei20a]{zeidler-band-2020}
Rudolf Zeidler.
\newblock Band width estimates via the {Dirac} operator.
\newblock {\em arXiv:1905.08520 [math]. To appear in J. Differential Geom},
  February 2020.

\bibitem[Zei20b]{zeidler-width-2020}
Rudolf Zeidler.
\newblock Width, largeness and index theory.
\newblock {\em SIGMA Symmetry Integrability Geom. Methods Appl.}, 16:Paper No.
  127, 15, 2020.

\bibitem[Zhu21]{zhu-width-2021}
Jintian Zhu.
\newblock Width estimate and doubly warped product.
\newblock {\em Trans. Amer. Math. Soc.}, 374(2):1497--1511, 2021.

\end{thebibliography}
\end{document}